\documentclass[11pt, reqno]{amsart}

\usepackage{amsmath,amssymb,amsthm, epsfig}
\usepackage{hyperref}
\usepackage[font=small,textfont=it]{caption}
\hypersetup{colorlinks=true, linkcolor=blue, citecolor=red, linktoc=page, pdftitle={Sharp regularity for singular obstacle problems}, pdfauthor={D.J. Ara\'ujo, R. Teymurazyan, V. Voskanyan}}
\usepackage{ulem}
\usepackage[mathscr]{euscript}
\usepackage[latin1]{inputenc}
\usepackage{xcolor}
\usepackage{hyperref}
\usepackage[nameinlink]{cleveref}
\hypersetup{
	colorlinks,
	linkcolor={blue!50!black},
	citecolor={blue!50!black},
	urlcolor={blue!50!black}
}

\usepackage{enumerate}
\usepackage{mathrsfs}
\usepackage[usenames,dvipsnames]{pstricks}
\usepackage{pst-grad} 
\usepackage{pst-plot} 


\newtheorem{theorem}{Theorem}

\newtheorem{lemma}{Lemma}

\newtheorem{corollary}{Corollary}
\newtheorem{remark}{Remark}

\date{}
\numberwithin{equation}{section}
\numberwithin{theorem}{section}
\numberwithin{lemma}{section}
\numberwithin{corollary}{section}
\numberwithin{remark}{section} 
\numberwithin{proposition}{section}
\numberwithin{definition}{section}

\newcommand{\Div}{\operatorname{div}}

\newcommand{\R}{\mathbb{R}}

\newcommand{\dist}{\operatorname{dist}}
\newcommand{\loc}{\operatorname{loc}}

\begin{document}
	
	\title[Sharp regularity for singular obstacle problems]{Sharp regularity for singular obstacle problems}
	
	\author[D.J. Ara\'ujo]{Dami\~ao J. Ara\'ujo}
	\address{UFPB, Department of Mathematics, Universidade Federal da Para\'iba, 58059-900, Jo\~ao Pessoa, PB, Brazil}{}
	\email{araujo@mat.ufpb.br}
	
	\author[R. Teymurazyan]{Rafayel Teymurazyan}
	\address{CMUC, University of Coimbra, CMUC, Department of Mathematics, 3001-501 Coimbra, Portugal}{}
	\email{rafayel@utexas.edu}
	
	\author[V. Voskanyan]{Vardan Voskanyan}
	\address{CMUC, University of Coimbra, CMUC, Department of Mathematics, 3001-501 Coimbra, Portugal}{}
	\email{vardan.voskanyan@mat.uc.pt}
	
	\begin{abstract}	
		We obtain sharp local $C^{1,\alpha}$ regularity of solutions for singular obstacle problems, Euler-Lagrange equation of which is given by
		$$
		\Delta_p u=\gamma(u-\varphi)^{\gamma-1}\,\text{ in }\,\{u>\varphi\},
		$$
		for $0<\gamma<1$ and $p\ge2$. At the free boundary $\partial\{u>\varphi\}$, we prove optimal $C^{1,\tau}$ regularity of solutions, with $\tau$ given explicitly in terms of $p$, $\gamma$ and smoothness of $\varphi$, which is new even in the linear setting.
		
		\bigskip
		
		\noindent \textbf{MSC (2020):} 35B65, 35J60, 35J75, 35B33, 49Q20, 49Q05.
		
		\bigskip
		
		\noindent \textbf{Keywords:} Singular obstacle problems, degenerate elliptic operators, sharp regularity, free boundary.
	\end{abstract}
	
	\pagenumbering{arabic} 
	
	\maketitle
	
	\section{Introduction} \label{sct intro}
	
	In this paper we study minimization problems with non-differentiable zero order dependence. More precisely, in a bounded domain $\Omega\subset\mathbb{R}^n$, for a constant $\gamma\in(0,1)$, we study regularity of minimizers of
	\begin{equation}\label{1.1}
		J(u)=\inf_{v\in\mathbb{K}}J(v),
	\end{equation}
	where
	$$
	J(v):=\int_{\Omega}\left(\frac{|\nabla v|^p}{p}+(v-\varphi)^\gamma\right)\,dx,
	$$
	and
	\begin{equation}\label{minimizationset}
		\mathbb{K}:=\left\{v\in W^{1,p}(\Omega);\,\, v\geq\varphi,\,\,v-g\in W^{1,p}_0(\Omega)\right\},
	\end{equation}
	with $\varphi\in C^{1,\beta}(\Omega)$, for a $\beta\in(0,1]$ and $g\in W^{1,p}(\Omega)$. The corresponding Euler-Lagrange equation is 
	\begin{equation}\label{EL}
		\Delta_p u =\gamma(u-\varphi)^{\gamma-1} \quad \mbox{in } \; \{u>\varphi\} \cap \Omega,
	\end{equation}
	where
	$$
	\Delta_p u:=\Div\left(|\nabla u|^{p-2}\nabla u\right)
	$$
	is the standard $p$-Laplacian operator, $2\leq p<+\infty$. Note that the right hand side in \eqref{EL} blows up at the free boundary points
	$$
	\partial\{u>\varphi\}\cap\Omega,
	$$
	which makes it essential to understand its effect on the regularity of minimizers. The parameter $\gamma$, thus, measures the magnitude of singularity.
	
	Problems like \eqref{1.1} are used, for example, to model the density of a certain chemical in reaction with a porous catalyst pellet (see, for instance, \cite{A75}), and due to their wide range of applications, were studied by many prominent mathematicians. In the linear setting ($p=2$), the extreme cases ($\gamma=0$ and $\gamma=1$) of \eqref{1.1} were studied in \cite{AC81} and \cite{C80} with flat obstacles ($\varphi\equiv0$). The case $\gamma=0$ is related to jets flow and cavity problems, and minimizers are known to have local Lipschitz (optimal) regularity, as is established by Alt and Caffarelli in \cite{AC81}. In the nonlinear setting the extreme case of $\gamma=0$ was studied in \cite{DP05}, where Lipschitz regularity of minimizers is established. These type of problems, often referred to as Bernoulli type problems, appear in heat flows, \cite{A77}, electrochemical machining, \cite{LS87}, etc. The case of $\gamma=1$ resembles the classical obstacle problem, and its solution, as is shown by Brezis and Kinderlehrer in \cite{BK74}, is of class $C^{1,1}$,  $p=2$. In the nonlinear setting, $p>2$, the obstacle problem was studied in \cite{ALS,FKR17}. Its unique solution, as is shown in \cite{ALS}, is of class $C^{1,\alpha}_{\loc}$ at the free boundary points with
	$$
	\alpha=\min\left\{\beta,\frac{1}{p-1}\right\}.
	$$
	The problem \eqref{1.1} was studied in \cite{P1} (with $p=2$ and flat obstacle), where, using a minimizer preserving scaling, it was shown that minimizers are locally of the class $C^{1,\frac{\gamma}{2-\gamma}}$, $0<\gamma<1$ (see also \cite{AS22} and \cite{AT13}, for the problem governed by the infinity Laplacian and uniformly elliptic fully nonlinear operators, respectively). There are, so called, monotonicity formulas available in the linear case, which play a crucial role in the study of the problem. For $\gamma\in(0,1)$, the nonlinear case is covered in \cite{LQT}, where for obstacle type problem (with zero obstacle) is proved that minimizers are locally of the class $C^{1,\alpha}$ with
	\begin{equation}\label{1.3}
		\alpha=\min\left\{\sigma^-,\frac{\gamma}{p-\gamma}\right\},
	\end{equation}   
	where $\sigma^-$ is the H\"older regularity exponent for the gradient of $p$-harmonic functions ($a^-$ stands for any $b<a$). Actually, for $n=2$, from \cite{ATU1} one concludes that $\alpha=\frac{\gamma}{p-\gamma}$. Observe that in all the above results (except in \cite{ALS}) the obstacle is assumed to be trivial, guaranteeing a vanishing gradient of solutions at the free boundary points, which is essential in the analysis (in \cite{ALS} obstacle is assumed to be of the class $C^{1,\beta}$, but the zero order dependence of the functional is smooth). The methods, used to obtain those results, fail to work in the presence of non-trivial obstacles with large gradients at the free boundary, and a new approach is required to tackle the issue.
	
	\medskip
	
	In this work, we prove sharp regularity for minimizers of \eqref{1.1} both locally and at the free boundary points. More precisely, we show that minimizers are locally of the class $C^{1,\alpha}$,
	where
	\begin{equation}\label{alpha}
		\alpha=\min\left\{\sigma^-,\frac{\gamma}{p-\gamma},\frac{\beta}{p-1}\right\},
	\end{equation}
	and $\sigma>0$ is the H\"older regularity exponent of the gradient of $p$-harmonic functions. Note that our result extends the local regularity \eqref{1.3} of \cite{LQT} for problems with non-trivial obstacles, which is new even for the linear case ($p=2$). Moreover, at the free boundary points we obtain optimal $C^{1,\tau}$ regularity for minimizers of \eqref{1.1}, which, unlike local interior estimates, \textit{does not} depend on the regularity of $p$-harmonic functions. To be exact,
	we show that minimizers of \eqref{1.1} at the free boundary are in $C^{1,\tau}$, where
	\begin{equation}\label{1.4}
		\tau=\min\left\{\beta,\frac{\gamma}{p-\gamma}\right\},
	\end{equation}
	which generalizes the \textit{optimal} regularity result obtained in \cite{P1} for the linear case and trivial obstacle. Thus, at free boundary points the interior regularity result of \cite{LQT} improves substantially. Indeed, as the obstacle in \cite{LQT} is assumed to be trivial, then from \eqref{1.4}, we have
	\begin{equation*}
		\tau=\frac{\gamma}{p-\gamma},
	\end{equation*}
	which is better than $\alpha$ from \eqref{1.3}. Observe also, that our result extends (continuously) the optimal regularity result of \cite{ALS}, from smooth lower order dependence to the singular setting (Theorem \ref{mainresult}). 
	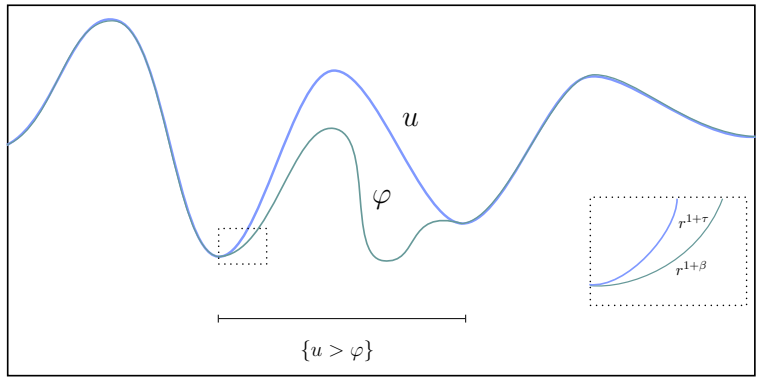
\begin{figure}[h]
		\begin{center}
			\scalebox{0.55} 
			{
				\begin{pspicture}(0,-5.5)(18.135412,3.5)
					\definecolor{colour0}{rgb}{0.5019608,0.6,1.0}
					\definecolor{colour1}{rgb}{0.4,0.6,0.6}
					\psframe[linecolor=black, linewidth=0.04, dimen=outer](18.123417,3.5)(0.023416443,-5.5)
					\psbezier[linecolor=colour0, linewidth=0.06](0.023416443,0.1)(1.2234164,0.7)(1.6234164,3.5)(2.7234163,3.1)(3.8234165,2.7)(4.128219,-2.5021117)(5.1234164,-2.6)(6.1186137,-2.6978884)(6.9237914,1.8726131)(7.9234166,1.9)(8.923041,1.927387)(10.023972,-1.7666851)(11.0234165,-1.8)(12.0228615,-1.8333148)(13.014926,1.2820944)(13.923416,1.7)(14.831907,2.1179056)(16.623417,0.2)(18.123417,0.3)
					\psbezier[linecolor=colour1, linewidth=0.04](5.1234164,-2.6)(6.4234166,-2.6)(6.933467,0.6414214)(7.9234166,0.5)(8.913366,0.35857865)(8.223416,-2.6)(9.123417,-2.7)(10.0234165,-2.8)(9.623417,-1.4)(11.0234165,-1.8)
					\psbezier[linecolor=colour1, linewidth=0.032](11.0234165,-1.8)(12.123417,-1.7)(13.223416,1.8)(14.223416,1.8)(15.223416,1.8)(16.423416,0.4)(18.123417,0.3)
					\psbezier[linecolor=colour1, linewidth=0.03](5.1234164,-2.6)(4.0234165,-2.4)(3.8234165,3.0)(2.6234164,3.1)(1.4234165,3.2)(1.2234164,0.6)(0.023416443,0.1)
					\psframe[linecolor=black, linewidth=0.04, linestyle=dotted, dotsep=0.10583334cm, dimen=outer](6.323416,-1.9)(5.1234164,-2.8)
					\psframe[linecolor=black, linewidth=0.04, linestyle=dotted, dotsep=0.10583334cm, dimen=outer](17.923416,-1.1434783)(14.106025,-3.8)
					\psbezier[linecolor=colour1, linewidth=0.03](14.123417,-3.3)(15.0234165,-3.4)(16.723417,-2.8)(17.323416,-1.2)
					\psbezier[linecolor=colour0, linewidth=0.04](14.101194,-3.2777777)(15.001194,-3.3777778)(16.223417,-2.0)(16.223417,-1.2)
					\rput[bl](16.190083,-3.0333333){$r^{1+\beta}$}
					\rput[bl](16.25675,-1.9){$r^{1+\tau}$}
					\rput[bl](8.85675,-1.4428571){{\huge $\varphi$}}
					\rput[bl](9.575797,0.56666666){{\huge $u$}}
					\rput[bl](7.124434,-5.124542){{\Large $\left\{ u > \varphi \right\}$}}
					\psline[linecolor=black, linewidth=0.02, tbarsize=0.07055555cm 6.0]{|-|}(5.1234164,-4.1)(11.123417,-4.1)
				\end{pspicture}
			}
			\caption{Detachment of $u$ from $\varphi$ at the free boundary.}
		\end{center}
	\end{figure}
	Our approach is based on geometric tangential analysis and a fine perturbation combined with adjusted scaling argument. Strictly speaking, we redeem regularity by ``tangentially accessing'' the information available in the ``flatness regime'' (for a rather comprehensive introduction to geometric tangential analysis, we refer the reader to \cite{TU21}). In the complementary case, i.e., when the gradient of a solution is bounded from below at a free boundary point, we use an ``adjusted scaling'' argument to ensure that in the limit we get a linear elliptic equation without the zero order term. The idea of the adjustment is to get rid of those terms that blow up at the limit.
	
	\medskip
	
	The paper is organized as follows: in Section \ref{s3}, we prove existence of minimizers and in Section \ref{s5}, establish local sharp $C^{1,\alpha}$ regularity result (Theorem \ref{t3.1}). In Section \ref{s6}, we obtain optimal regularity at free boundary points for minimizers with small gradient (Theorem \ref{t4.1}). Section \ref{s8} is devoted to the adjusted scaling argument. Finally, in Section \ref{s9}, we obtain sharp regularity for minimizers with large gradient at the free boundary points (Theorem \ref{t5.1}). We close the paper with two appendices, containing some auxiliary technical results (Appendix \ref{A}) and a list of several known ones (Appendix \ref{B}), that are used in the paper. 
	
	\subsection*{Notations and assumptions}
	Hereafter $B_r(x_0)$ is the ball of radius $r$ centered at $x_0$, $B_r(0)=B_r$, and $|B_r|$ stands for the volume of the ball $B_r$. When $\Omega=B_r$ in \eqref{minimizationset}, we will often use the notation $\mathbb{K}_r$ instead of $\mathbb{K}$. Additionally, for a given integrable function $f$, we denote by $(f)_{r}$ its average on the ball of radius $r$ centered at the origin, i.e.,  
	$$
	(f)_{r}:=\frac{1}{|B_r|}\int_{B_r} f(x)\,dx.
	$$
	To avoid repetition of arguments when applying the conclusions for different set of functions, we introduce a function $H:\R^n\to\R_+$, which is assumed to be of class $C^2$ and satisfy the following structural assumptions
	\begin{equation}\label{assumptions}
		\left\{
		\begin{array}{rclcl}
			|\nabla H(\xi)|&\le&\displaystyle \Upsilon\,\omega(|\xi|), \vspace*{0.2cm}\\
			|D^2 H(\xi)|&\le&\displaystyle\Lambda \frac{\omega(|\xi|)}{|\xi|},\vspace*{0.2cm}\\
			\eta^T D^2 H(\xi)\eta&\geq&\displaystyle\lambda \frac{\omega(|\xi|)}{|\xi|}|\eta|^2,
		\end{array}
		\right.
	\end{equation}
	with
	$$
	\omega(z):=\kappa_1z^{p-1}+\kappa_2z,\,\,\,z\ge0,
	$$
	where $\xi$, $\eta\in\R^n$ and $\Upsilon>0$, $\Lambda\ge\lambda>0$, $\kappa_1\ge0$, $\kappa_2\ge0$ are constants, and $\kappa_1+\kappa_2>0$. We also will use the notation
	\begin{equation}\label{3.1}
		G(z):=\int_0^z\omega(\zeta)\,d\zeta=\kappa_1\frac{z^p}{p}+\kappa_2\frac{z^2}{2},\quad z\in\R_+.
	\end{equation}
	\begin{remark}\label{r2.1}
		The function $H(\xi)=p^{-1}|\xi|^p$ satisfies the above conditions with $\kappa_1=1$ and $\kappa_2=0$. The classical linear version, $p=2$, is recovered by assuming $\kappa_1=0$. An alternative example of a function $H$ satisfying \eqref{assumptions} is constructed in Appendix \ref{A}.
	\end{remark}
	
	\section{Existence of minimizers}\label{s3}
	In this section we show that there exists at least one minimizer of \eqref{1.1}. Unlike the regular case ($\gamma$=1), which is known to have a unique minimizer (see, for example, \cite{ALS,CLRT14,FKR17,PSU12,RT11}) in the singular setting this is not assured.
	\begin{theorem}\label{existence}
		If $\varphi\in W^{1,p}(\Omega)\cap L^\infty(\Omega)$, $g\in W^{1,p}(\Omega)$ and $\gamma\in(0,1)$, then there exists a minimizer $u$ of \eqref{1.1}. Moreover, 
		\begin{equation}\label{bound}
			\|u\|_{L^\infty(\Omega)}\le\max\left\{\|g\|_{L^{\infty}(\Omega)},\|\varphi\|_{L^\infty(\Omega)}\right\}.
		\end{equation}
	\end{theorem}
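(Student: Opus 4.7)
\textbf{The plan} is to apply the direct method of the calculus of variations, then use truncation for the $L^\infty$ bound. Since $v \geq \varphi$ on $\mathbb{K}$, both integrands of $J$ are nonnegative, so $\inf_{\mathbb{K}} J \geq 0$ (and $\mathbb{K}\neq\emptyset$, as $\max\{g,\varphi\}$ is admissible). Fix a minimizing sequence $\{v_k\} \subset \mathbb{K}$. Since $v_k - g \in W^{1,p}_0(\Omega)$ and $\int_\Omega |\nabla v_k|^p$ is bounded along the sequence, the Poincar\'e inequality yields a uniform bound in $W^{1,p}(\Omega)$. Passing to a subsequence, $v_k \rightharpoonup u$ weakly in $W^{1,p}(\Omega)$, $v_k \to u$ strongly in $L^p(\Omega)$ (by Rellich), and a.e. in $\Omega$.

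\textbf{Next, I would check admissibility of $u$ and pass to the liminf.} The inequality $v_k \geq \varphi$ is preserved a.e., and weak closedness of $W^{1,p}_0(\Omega)$ gives $u - g \in W^{1,p}_0(\Omega)$, so $u \in \mathbb{K}$. Convexity of $\xi \mapsto |\xi|^p/p$ provides weak lower-semicontinuity of the gradient term. For the singular lower-order term, the elementary inequality $|a^\gamma - b^\gamma| \leq |a-b|^\gamma$, valid for $a,b \geq 0$ and $\gamma \in (0,1)$, combined with H\"older's inequality, yields
\[
\int_\Omega \bigl|(v_k - \varphi)^\gamma - (u-\varphi)^\gamma\bigr|\,dx \leq |\Omega|^{1-\gamma/p}\,\|v_k - u\|_{L^p(\Omega)}^{\gamma} \longrightarrow 0,
\]
so this term is in fact \emph{continuous} in the strong $L^p$ topology. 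Combining both facts gives $J(u) \leq \liminf_k J(v_k) = \inf_{\mathbb{K}} J$, so $u$ is a minimizer.

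\textbf{For the $L^\infty$ bound} \eqref{bound}, I would truncate. Setting $M := \max\{\|g\|_{L^\infty(\Omega)}, \|\varphi\|_{L^\infty(\Omega)}\}$ and $\tilde u := \min\{u, M\}$, one has $\tilde u \geq \varphi$ (since $\varphi \leq M$) and $\tilde u - g \in W^{1,p}_0(\Omega)$ (since $g \leq M$ forces $\tilde u = g$ on $\partial\Omega$ in the trace sense). On $\{u > M\}$, $\nabla \tilde u = 0$ and $0 \leq \tilde u - \varphi = M - \varphi \leq u - \varphi$, so both integrands of $J$ drop pointwise; hence $J(\tilde u) \leq J(u)$. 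Thus $\tilde u$ is itself a minimizer and satisfies \eqref{bound}.

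\textbf{Main obstacle.} The zero-order integrand $v \mapsto (v-\varphi)^\gamma$ is \emph{concave} in $v$, so weak lower-semicontinuity is not automatic from convexity arguments, and a uniform $L^\infty$ bound on $v_k$ is not available a priori (which would otherwise let one invoke dominated convergence directly). The sharp inequality $|a^\gamma - b^\gamma| \leq |a-b|^\gamma$ resolves this neatly by upgrading strong $L^p$ convergence straight to strong $L^1$ convergence of $(v_k - \varphi)^\gamma$, sidestepping the need for any dominating function or higher-integrability input.
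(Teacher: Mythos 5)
Your proof is correct and follows essentially the same route as the paper: direct method (minimizing sequence, Poincar\'e, Rellich, weak lower semicontinuity of the gradient term) followed by truncation at $M$ for the $L^\infty$ bound. One point where you are actually more careful than the paper: the paper invokes only ``lower semi-continuity of the Dirichlet integral'' and does not explicitly address the zero-order term $\int(v_i-\varphi)^\gamma\,dx$, whereas you supply the inequality $|a^\gamma-b^\gamma|\le|a-b|^\gamma$ together with H\"older to show that term is in fact strongly $L^1$-continuous along the minimizing sequence; this cleanly closes a step the paper leaves implicit (one could also get by with Fatou plus a.e.\ convergence, but your version is sharper). For the $L^\infty$ bound your pointwise comparison $J(\tilde u)\le J(u)$ is the same mechanism as the paper's; the paper then additionally concludes $u=u^M$ from $\int_{\{u>M\}}|\nabla u|^p=0$, while you stop at observing that $\tilde u$ is itself a bounded minimizer, which is all the theorem statement requires. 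A minor caveat on your aside that $\max\{g,\varphi\}\in\mathbb{K}$: this needs the (implicitly assumed) trace compatibility $\varphi\le g$ on $\partial\Omega$, without which $\mathbb{K}$ could be empty; neither you nor the paper flags this, and it does not affect the argument.
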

	\begin{proof}
		Set
		$$
		m:=\inf_{v\in\mathbb{K}}J(v)\ge0.
		$$
		If $v_i\in\mathbb{K}$ is a minimizing sequence, then for $i\ge i_0$, $i_0\in\mathbb{N}$, one has 
		$$
		0\le J(v_i)\le m+1,
		$$
		hence
		$$
		\int_\Omega|\nabla v_i|^p\,dx\le pJ(v_i)\le p(m+1),
		$$
		and the Poincar\'e inequality yields that the sequence $v_i$ is bounded in $W_0^{1,p}(\Omega)$. Therefore, by Rellich-Kondrachov theorem, there is a function $u\in W_0^{1,p}(\Omega)$ such that, up to a subsequence,
		$$
		v_i\to u\, \text{ weakly in }\, W^{1,p}(\Omega)\,\,\,\text{ and }\,\,\, v_i\to u\, \text{ in }\,L^p(\Omega).
		$$
		Notice that $u\in\mathbb{K}$, and thus, using the lower semi-continuity of the Dirichlet integral, we obtain
		$$
		m\le J(u)\le\liminf_{j\to\infty}J(v_i)=m,
		$$
		i.e., $u$ is a minimizer of \eqref{1.1}.
		
		To see \eqref{bound}, set
		$$
		u^M:= \min\{u, M\}\in\mathbb{K},
		$$
		where
		$$
		M:=\max\left\{\|g\|_{L^{\infty}(\Omega)},\|\varphi\|_{L^\infty(\Omega)}\right\}.
		$$	
		Since $u$ is a minimizer,
		$$
		\int_{\Omega}\frac{|\nabla u|^p}{p}\,dx-\int_{\Omega}\frac{|\nabla u^M|}{p}\,dx\le\int_{\Omega} \left((u^M-\varphi)^{\gamma}-(u-\varphi)^{\gamma}\right)\,dx,
		$$
		and thus
		$$
		0\leq \int_{\{u>M\}}\frac{|\nabla u|^p}{p}\,dx \le\int_{\Omega} \left((u^M-\varphi)^{\gamma}-(u-\varphi)^{\gamma}\right)\,dx\le0,
		$$
		which implies that $u=u^M$. Therefore, $-\|\varphi\|_{L^\infty(\Omega)}\le\varphi\le u\le M$.
	\end{proof}
	
	\section{Local $C^{1,\alpha}$ regularity estimates}\label{s5}
	
	One of the main steps towards the optimal regularity of minimizers is obtaining $C^{1,\alpha}$ regularity for minimizers of
	\begin{equation}\label{J}
		J_\delta(v):=\int_\Omega\left(H(\nabla v)+\delta(v-\varphi)^\gamma\right)\,dx
	\end{equation}
	over the set $\mathbb{K}$, defined by \eqref{minimizationset}. Here $\delta\in[0,1]$, $p\ge2$ and $H$ satisfies \eqref{assumptions}. The key step towards the regularity is the result on the decay of integral oscillation -- comparing energy estimates involving minimizers of $J_\delta$ with the ones of
	$$
	\min\limits_{v \in W^{1,p}(\Omega)} \int_\Omega\left( H(\nabla v)+ H(\nabla \varphi) \cdot \nabla v \right) \, dx, 
	$$
	where the latter is an unconstrained (and non-singular) minimization problem with smooth first order coefficients. Local sharp regularity for minimizers of $J_\delta$ is established based on the following auxiliary lemmas.
	\begin{lemma}\label{l3.4}
		If $f\in C^{0,\beta}(B_R;\R^n)$ for some $\beta\in(0,1)$, and $w$ is a minimizer of
		$$
		\int_{B_R}\left(H(\nabla w)-f\cdot\nabla w\right)\,dx,
		$$
		in $W^{1,p}_g(B_R)$, then there exist constants $C$, $\sigma >0$ depending only on $\kappa_1$, $\kappa_2$, $\|f\|_{C^{0, \beta}(B_R)}$ and  $ \|w\|_{L^{\infty}(B_R)}$, such that
		\begin{equation*}
			\int_{B_r} G(|\nabla w-(\nabla w)_{r}|)\,dx\le C \left(\frac{r}{R}\right)^{n+q\sigma}\int_{B_R}G(|\nabla w-(\nabla w)_{R}|)\,dx+CR^{n+q\frac{\beta}{q-1}},
		\end{equation*}
		where
		\begin{equation}\label{q}
			q:=\left\{
			\begin{array}{ccl}
				2, & \mbox{when} & \kappa_1=0,\\[0.2cm] 
				p, & \mbox{when} & \kappa_1>0.
			\end{array}
			\right.
		\end{equation}
	\end{lemma}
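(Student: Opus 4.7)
The plan is a classical Campanato-type perturbation argument: freeze the coefficient $f$ at the centre of $B_R$, compare $w$ with the minimizer of the resulting constant-coefficient problem (whose gradient has a known H\"older-type Campanato decay), and control the error through the H\"older continuity of $f$.

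\textbf{Frozen comparison.} I would introduce $v\in W^{1,p}(B_R)$ as the (unique) minimizer of $\int_{B_R} H(\nabla v)\,dx$ subject to $v-w\in W^{1,p}_0(B_R)$. Since $f(x_0)$ is a constant vector, $v$ equivalently minimizes $\int_{B_R}(H(\nabla v)-f(x_0)\cdot\nabla v)\,dx$, and $v$ weakly solves $\Div(\nabla H(\nabla v))=0$ in $B_R$. By the regularity theory for such degenerate/singular equations of Orlicz growth (Ural'tseva/Tolksdorf/Lieberman), $v\in C^{1,\sigma}_{\loc}(B_R)$ for some $\sigma=\sigma(n,p,\kappa_1,\kappa_2)>0$, together with the Campanato decay
$$
\int_{B_r} G(|\nabla v-(\nabla v)_r|)\,dx \le C\left(\tfrac{r}{R}\right)^{n+q\sigma}\int_{B_R} G(|\nabla v-(\nabla v)_R|)\,dx, \quad 0<r\le R.
$$

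\textbf{Comparison estimate.} Subtracting the weak Euler-Lagrange formulations for $w$ and $v$ and testing against the admissible function $\phi=w-v\in W^{1,p}_0(B_R)$ yields
$$
\int_{B_R}\left(\nabla H(\nabla w)-\nabla H(\nabla v)\right)\cdot(\nabla w-\nabla v)\,dx=\int_{B_R}(f(x)-f(x_0))\cdot(\nabla w-\nabla v)\,dx.
$$
The lower bound on $D^2H$ in \eqref{assumptions}, combined with the standard inequality $(|a|+|b|)^{p-2}|a-b|^2\ge c|a-b|^p$ (valid for $p\ge 2$), produces the $G$-monotonicity $(\nabla H(a)-\nabla H(b))\cdot(a-b)\ge c\,G(|a-b|)$. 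Using $|f(x)-f(x_0)|\le\|f\|_{C^{0,\beta}(B_R)}R^\beta$, H\"older's inequality with exponent $q$ from \eqref{q}, and the pointwise bound $G(s)\ge c\,s^q$, I would absorb the resulting gradient term into the left-hand side to obtain the comparison bound
$$
\int_{B_R} G(|\nabla w-\nabla v|)\,dx\le C\,R^{\,n+q\beta/(q-1)}.
$$

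\textbf{Assembly.} Writing $\nabla w-(\nabla w)_r=(\nabla v-(\nabla v)_r)+(\nabla w-\nabla v)-(\nabla w-\nabla v)_r$, and using the quasi-subadditivity $G(s+t)\le C(G(s)+G(t))$ together with Jensen's inequality for $G$, one arrives at
$$
\int_{B_r} G(|\nabla w-(\nabla w)_r|)\,dx \le C\int_{B_r} G(|\nabla v-(\nabla v)_r|)\,dx + C\int_{B_R} G(|\nabla w-\nabla v|)\,dx,
$$
and an analogous bound permitting one to replace $v$ by $w$ in the right-hand side of the frozen decay on $B_R$. Plugging the previous two steps into these splittings produces the desired estimate. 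The main obstacle is the \emph{comparison estimate}: one must carefully combine the quasi-$p$-monotonicity of $\nabla H$, the H\"older estimate on $f$, and the optimal use of H\"older's inequality with exponent $q$ from \eqref{q} to extract the sharp lower-order exponent $q\beta/(q-1)$. The hypothesis $p\ge 2$ is precisely what allows one to pass from the weighted monotonicity dictated by \eqref{assumptions} to the clean $G(|a-b|)$ lower bound used throughout.
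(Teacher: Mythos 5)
Your proposal follows essentially the same Campanato-perturbation route as the paper: compare $w$ with the unconstrained $H$-minimizer $v$ sharing its boundary values, derive the comparison estimate $\int_{B_R} G(|\nabla w-\nabla v|)\,dx\le CR^{\,n+q\beta/(q-1)}$ from the monotonicity of $\nabla H$ and the H\"older continuity of $f$, and then combine the Campanato decay for $\nabla v$ with the quasi-subadditivity of $G$ to transfer it to $\nabla w$. The only cosmetic differences are that you freeze $f$ at the centre $f(x_0)$ while the paper subtracts the average $(f)_R$ (both exploit that a constant vector integrated against $\nabla(w-v)$ vanishes), and you subtract the Euler--Lagrange equations and invoke the $G$-monotonicity $(\nabla H(a)-\nabla H(b))\cdot(a-b)\ge cG(|a-b|)$, whereas the paper uses the one-sided convexity inequality \eqref{leq:HPQ} together with the first-variation inequality at $v$ -- two equivalent ways to obtain the same coercive bound.
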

	\begin{proof}
		Let $v_R$ be the minimizer of 
		$$
		\int_{B_R} H(\nabla v)\,dx,
		$$
		in $W_w^{1,p}(B_R)$. From \eqref{leq:HPQ} of Appendix \ref{A}, we have
		\begin{equation}\label{3.5}
			\begin{aligned}
				&\int_{B_R}\left(H(\nabla w)-H(\nabla v_R) \right)\,dx\\
				&\geq\int_{B_R}\nabla H(\nabla v_R)\cdot(\nabla w-\nabla v_R)\,dx+c\int_{B_R} G(|\nabla w- \nabla v_R|)\,dx,
			\end{aligned}
		\end{equation}
		where $c>0$ is a universal constant. In addition, as
		$$
		\varphi(t):=\int_{B_R} H(\nabla v_R +t\nabla (w-v_R))\, dx
		$$	
		has a minimum at $t=0$, then
		$$
		\int_{B_R} \nabla H(\nabla v_R)\cdot \nabla (w-v_R) \, dx=\frac{d}{dt} \varphi(t)\Big|_{t=0}\geq0,
		$$
		which, combined with \eqref{3.5}, provides
		\begin{equation}\label{3.6}
			\displaystyle\int_{B_R}\left(H(\nabla w)-H(\nabla v_R)\right)\,dx\geq  c\displaystyle\int_{B_R} G(|\nabla w- \nabla v_R|)\,dx.
		\end{equation}
		On the other hand, the definition of $w$ implies \begin{equation}\label{3.7}
			\int_{B_R} f\cdot (\nabla w - \nabla v_R) \; dx \geq \int_{B_R}\left[H(\nabla w) - H(\nabla v_R)\right]\,dx.    
		\end{equation}
		From \eqref{3.6} and \eqref{3.7} one has
		\begin{equation}\label{3.8}
			\int_{B_R} G(|\nabla w- \nabla v_R|)\,dx\le\frac{1}{c}\int_{B_R} f\cdot(\nabla w-\nabla v_R)\,dx.
		\end{equation}
		Observe that as $v_R=w$ on $\partial B_R$, then
		$$
		\int_{B_R}(f)_{R}\cdot (\nabla w-\nabla v_R)\,dx=0,
		$$
		which, together with \eqref{3.8} and the H\"older inequality gives
		\begin{equation}\label{3.9}
			\begin{array}{rl}
				&\displaystyle\int_{B_R} G(|\nabla w- \nabla v_R|)\,dx\leq  \displaystyle\frac{1}{c} \displaystyle\int_{B_R}(f-(f)_{R})\cdot (\nabla w -\nabla v_R)\,dx \\[0.5cm]
				\leq & \displaystyle\frac{1}{c}\left(\int_{B_R}|f-(f)_{R}|^{q'}\,dx \right)^{\frac{1}{q'}}\left(\displaystyle\int_{B_R} |\nabla w-\nabla v_R|^q\,dx \right)^{\frac{1}{q}},
			\end{array}
		\end{equation}
		where $q'$ is the conjugate of $q$, i.e., $q'=\frac{q}{q-1}$. Recalling that $f \in C^{0,\beta}$ and using Campanato's characterization of H\"older spaces,  \cite[Theorem 5.5]{GM12}, from \eqref{3.9} we deduce
		\begin{equation}\label{3.10}	
			\int_{B_R} G(|\nabla w-\nabla v_R|)\,dx\leq  C R^{\frac{n(q-1)+q\beta}{q}}\left(\displaystyle\int_{B_R} |\nabla w -\nabla v_R|^q\,dx \right)^{\frac{1}{q}},	
		\end{equation}
		where $C>0$ is a universal constant. Observe that (see \eqref{3.1})
		$$
		\int_{B_R} |\nabla w -\nabla v_R|^q\,dx\le\int_{B_R} G(|\nabla w-\nabla v_R|)\,dx,
		$$
		therefore, \eqref{3.10} leads to
		\begin{equation}\label{3.11}
			\displaystyle\int_{B_R} G(|\nabla w-\nabla v_R|)\,dx\leq C R^{n+q\frac{\beta}{q-1}},
		\end{equation}
		for a universal constant $C>0$, depending only on $\kappa_1$, $\kappa_2$ and $p$. Combining the latter with Lemma \ref{l3.3} from Appendix \ref{A}, for $r\in(0,R)$, we obtain
		\begin{equation}\label{3.12}
			\begin{array}{ccl}
				\displaystyle\int_{B_r} G(|\nabla w-(\nabla w)_{r}|)\,dx &\leq& \displaystyle C\int_{B_r} G(|\nabla v_R-(\nabla v_R)_{r}|)\,dx \\[0.5cm] 
				&+&CR^{n+q\frac{\beta}{q-1}}.
			\end{array}
		\end{equation} 
		We estimate the first term of the right hand side in \eqref{3.12} by applying Lemma \ref{l3.2} from Appendix \ref{A} to $v_R$:
		\begin{equation}\nonumber	
			\int_{B_r}G(|\nabla v-(\nabla v)_{r}|)\,dx\leq C\left(\frac{r}{R}\right)^{n+q\sigma} \int\limits_{B_R} G(|\nabla v_R-(\nabla v_R)_{R}|)\,dx,
		\end{equation}
		for a universal constant $\sigma>0$. We then estimate the right hand side of the last inequality by using Lemma \ref{l3.3} together \eqref{3.11} to arrive at  
		\begin{equation}\nonumber
			\displaystyle\int\limits_{B_R}G(|\nabla v_R-(\nabla v_R)_{R}|)\,dx\le C\int\limits_{B_R} G(|\nabla w-(\nabla w)_{R}|)\,dx+CR^{n+q\frac{\beta}{q-1}}.
		\end{equation}
		Plugging the last two inequalities into \eqref{3.12}, we obtain the desired result.
	\end{proof}
	Next, using the unconstrained problem of Lemma \ref{l3.4}, we prove a gradient integral oscillation decay for minimizers of the constrained $H$-Dirichlet energy. Its proof is based on auxiliary lemmas from Appendix \ref{A}.
	\begin{lemma}\label{l3.5}
		If $\varphi \in C^{1,\beta}(B_R)$ and $u$ is the solution of the obstacle problem
		\begin{equation}\label{3.13}
			\int_{B_R}H(\nabla u)\,dx=\min_{v\in\mathbb{K}_R}\int_{B_R}H(\nabla v)\,dx,
		\end{equation}
		where $\mathbb{K}_R$ is defined by \eqref{minimizationset}, then there exist $C>0$ and $\sigma>0$ constants, depending only on $\kappa_1$, $\kappa_2$, $\|\varphi\|_{C^{1, \beta}(B_R)}$ and  $\|u\|_{L^{\infty}(B_R)}$, such that for $r\in(0,R)$,
		\begin{equation*}\label{3.15}
			\begin{aligned}
				\int_{B_r} G\left(|\nabla u-(\nabla u)_{r}|\right)\,dx
				&\leq C \left(\frac{r}{R}\right)^{n+q\sigma}\int_{B_R} G(|\nabla u-(\nabla u)_{R}|)\,dx\\
				&+CR^{n+q\frac{\beta}{q-1}},
			\end{aligned}
		\end{equation*}
		where $q$ is defined by \eqref{q}.	
	\end{lemma}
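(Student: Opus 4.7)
The plan is to reduce the constrained obstacle problem to the unconstrained setting of Lemma \ref{l3.4} by choosing the first-order coefficient to be $f:=\nabla H(\nabla\varphi)$. Since $\varphi\in C^{1,\beta}(B_R)$ and $\nabla H$ is locally Lipschitz on bounded subsets of $\R^n$ under \eqref{assumptions} with $p\ge 2$, the vector field $\nabla H(\nabla\varphi)$ lies in $C^{0,\beta}(B_R;\R^n)$, so Lemma \ref{l3.4} will be applicable.

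Concretely, I introduce the auxiliary unconstrained minimizer $\tilde w\in W^{1,p}(B_R)$ of
$$\int_{B_R}\bigl(H(\nabla w)-\nabla H(\nabla\varphi)\cdot\nabla w\bigr)\,dx,\qquad w=u\text{ on }\partial B_R.$$
The first step is to verify that $\tilde w\ge\varphi$, so that $\tilde w\in\mathbb{K}_R$ can be used as a test competitor in the obstacle problem. Testing against $w':=\max(\tilde w,\varphi)$ (which equals $u$ on $\partial B_R$ since $u\ge\varphi$), the convexity inequality
$$H(\nabla\tilde w)-H(\nabla\varphi)-\nabla H(\nabla\varphi)\cdot(\nabla\tilde w-\nabla\varphi)\ge 0$$
shows that the auxiliary functional does not increase in passing from $\tilde w$ to $w'$; minimality together with strict convexity then forces $\nabla\tilde w=\nabla\varphi$ on $\{\tilde w<\varphi\}$, and boundary matching on each connected component rules out this set.

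The heart of the proof is the closeness estimate
\begin{equation*}
\int_{B_R} G(|\nabla u-\nabla\tilde w|)\,dx\le CR^{n+q\beta/(q-1)},
\end{equation*}
which plays the role of \eqref{3.11}. I would derive it by combining three ingredients: (i) the obstacle variational inequality $\int\nabla H(\nabla u)\cdot\nabla(\tilde w-u)\,dx\ge 0$, available because $\tilde w\in\mathbb{K}_R$; (ii) the minimality of $\tilde w$ tested against $u$, which gives $\int[H(\nabla\tilde w)-H(\nabla u)]\,dx\le\int\nabla H(\nabla\varphi)\cdot\nabla(\tilde w-u)\,dx$; and (iii) the strong convexity bound \eqref{leq:HPQ}, yielding $\int[H(\nabla\tilde w)-H(\nabla u)]\,dx\ge\int\nabla H(\nabla u)\cdot\nabla(\tilde w-u)\,dx+c\int G(|\nabla\tilde w-\nabla u|)\,dx$. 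Chaining these three items, subtracting $(\nabla H(\nabla\varphi))_R$ using $\tilde w-u\in W^{1,p}_0(B_R)$, and applying H\"older together with Campanato's characterization of $C^{0,\beta}$ mirror \eqref{3.9}--\eqref{3.11} line by line.

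Once this is established, Lemma \ref{l3.4} applied to $\tilde w$ provides the desired decay for $\int_{B_r} G(|\nabla\tilde w-(\nabla\tilde w)_r|)\,dx$, and Lemma \ref{l3.3} transfers it to $u$ via the closeness estimate, reproducing the final steps of the proof of Lemma \ref{l3.4} and yielding the claimed inequality. The main obstacle is the closeness estimate: the quantitative $R^{n+q\beta/(q-1)}$ bound depends on delicately invoking the obstacle variational inequality, which is only legitimate after establishing $\tilde w\ge\varphi$; once that is secured, the remainder is a routine chaining of the tools already used for Lemma \ref{l3.4}.
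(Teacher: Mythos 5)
Your proof is correct and follows essentially the same route as the paper's: introduce the auxiliary unconstrained minimizer of $\int_{B_R}\left(H(\nabla w)-\nabla H(\nabla\varphi)\cdot\nabla w\right)\,dx$ with boundary data $u$, show it dominates the obstacle so it is admissible, derive the closeness estimate $\int_{B_R}G(|\nabla u-\nabla\tilde w|)\,dx\le CR^{n+q\beta/(q-1)}$, then finish with Lemma \ref{l3.2} and Lemma \ref{l3.3} as in Lemma \ref{l3.4}. Your chain of (i)--(iii) reproduces the paper's inequality \eqref{3.17}; the paper reaches it by combining $\int(H(\nabla u)-H(\nabla w_R))\,dx\le 0$, the convexity bound \eqref{leq:HPQ}, and the Euler--Lagrange equation \eqref{3.16}, whereas you combine the obstacle variational inequality for $u$, the zeroth-order minimality of $\tilde w$, and \eqref{leq:HPQ} --- logically equivalent.

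The one place you genuinely deviate is the verification $\tilde w\ge\varphi$. The paper quotes the comparison/maximum principle for the equation $\Div(\nabla H(\nabla w_R))=\Div(\nabla H(\nabla\varphi))$, citing \cite{HKM06}; you instead test $\tilde w$ against the truncation $w':=\max(\tilde w,\varphi)$, use convexity of $H$ to see the auxiliary functional does not increase, invoke strict convexity (available from \eqref{leq:HPQ}) to force $\nabla\tilde w=\nabla\varphi$ a.e.\ on $\{\tilde w<\varphi\}$, and then rule that set out. This is a standard truncation argument and is perfectly valid; it has the small merit of avoiding the comparison principle as a black box, at the cost of needing to justify that $v^-:=(\tilde w-\varphi)^-$ lies in $W^{1,p}_0(B_R)$ with $\nabla v^-=0$ a.e.\ (which you should state explicitly rather than appeal loosely to ``connected components''). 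Apart from that small gap in rigor, the proposal is complete.
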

	\begin{proof}
		If $w_R$ is the minimizer of 
		$$
		\int_{B_R}\left(H(\nabla w)-\nabla H(\nabla \varphi)\cdot\nabla w\right)\,dx
		$$ 
		in $W^{1,p}_u(B_R)$, then 
		\begin{equation}\label{3.16}
			\Div(\nabla H(\nabla w_R))=\Div(\nabla H(\nabla \varphi))\quad\textrm{ in }\quad B_R,
		\end{equation}
		with $w_R\geq\varphi$ on $\partial B_R$. By the maximum principle (see, for example, \cite{HKM06}), $w_R\geq\varphi$ in $B_R$. Thus, $w_R\in\mathbb{K}_R$, i.e., it is a competing function in the obstacle problem \eqref{3.13}, therefore,
		$$
		\int_{B_R}\left(H(\nabla u)-H(\nabla w_R)\right)\,dx\leq 0.
		$$
		On the other hand, recalling \eqref{leq:HPQ} from Appendix \ref{A}, we have
		\begin{equation*}
			\begin{aligned}
				\int_{B_R}\left(H(\nabla u)-H(\nabla w_R)\right)\,dx
				&\geq\int_{B_R}\nabla H(\nabla w_R)\cdot(\nabla u -\nabla w_R)\,dx\\
				&+c\int_{B_R}G(|\nabla u-\nabla w_R|)\,dx,
			\end{aligned}
		\end{equation*}
		hence
		\begin{equation}\label{3.17}
			\begin{aligned}
				c\int_{B_R}G(|\nabla u-\nabla w_R|)\,dx &\leq\int_{B_R}\nabla H(\nabla w_R)\cdot(\nabla w_R-\nabla u)\,dx\\
				&=\int_{B_R}\nabla H(\nabla\varphi)\cdot(\nabla w_R-\nabla u)\,dx,
			\end{aligned}
		\end{equation}
		where the equality is obtained as a consequence of \eqref{3.16}. As in the proof of Lemma \ref{l3.4}, using Campanato's characterization of the H\"older continuity for $f:=\nabla H(\nabla\varphi)$ and the H\"older inequality, from \eqref{3.17}, we deduce
		\begin{equation*}
			\begin{aligned}
				\int_{B_R}G(|\nabla u-\nabla w_R|)\,dx	&\leq C\int_{B_R}\left(f-(f)_{R}\right)\cdot(\nabla w_R-\nabla u)\,dx\\
				&\leq C\left(\int_{B_R} |\nabla u-\nabla w_R|^q\,dx\right)^{\frac{1}{q}} R^{\frac{n(q-1)}{q}+\beta},
			\end{aligned}
		\end{equation*}
		which, as
		$$
		\int_{B_R} |\nabla u-\nabla w_R|^q\,dx\le\int_{B_R}G(|\nabla u-\nabla w_R|)\,dx,
		$$ 
		provides
		$$
		\int_{B_R} G(|\nabla u-\nabla w_R|)\,dx\leq C R^{n+q\frac{\beta}{q-1}},
		$$
		where $C>0$ is a universal constant. The latter, combined with Lemma \ref{l3.2} and Lemma \ref{l3.3} from Appendix \ref{A}, gives the desired result, as argued in the proof of Lemma \ref{l3.4}.
	\end{proof}
	We are now ready to prove the main result of this section. It follows by using the lemmas obtained above and invoking arguments similar to those in \cite[Theorem 2]{C91} and \cite[Theorem 1.1]{LQT}. 
	\begin{theorem}\label{t3.1}
		If $u$ is a minimizer of 
		$$
		J_\delta(u)=\inf_{v\in\mathbb{K}}J_\delta(v),
		$$
		where $J_\delta$ is defined by \eqref{J} and $\mathbb{K}$ is defined by \eqref{minimizationset}, then for every $\Omega'\subset\subset\Omega$, there exists a constant $C>0$, depending only on $\dist(\Omega',\partial\Omega)$, $\|u\|_{L^{\infty}(\Omega)}$, $\|\varphi\|_{C^{1,\beta}(\Omega)}$, $p$, $\gamma$ and $n$, but not depending on $\delta$, such that
		$$
		\|u\|_{C^{1,\alpha}(\Omega')} \leq C,
		$$
		for an $\alpha\in(0,1)$ depending only on $p$, $\gamma$, $\beta$ and $n$ and
		$$
		\alpha=\min\left\{\sigma^-,\frac{\gamma}{q-\gamma},\frac{\beta}{q-1}\right\},
		$$
		where $q$ is defined by \eqref{q}, and $\sigma>0$ is the H\"older regularity exponent for the gradient of $H$-harmonic functions.
	\end{theorem}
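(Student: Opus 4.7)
The plan is to establish a Campanato-type decay of the $G$-oscillation of $\nabla u$ on concentric balls and then invoke Campanato's characterization of H\"older spaces. Concretely, for each $x_0 \in \Omega'$ and small $r$ I aim to prove
\[\int_{B_r(x_0)} G(|\nabla u - (\nabla u)_r|)\,dx \le Cr^{n + q\alpha},\]
which, by Campanato's theorem, yields $u \in C^{1,\alpha}(\Omega')$ with $\alpha = \min\{\sigma^-, \gamma/(q-\gamma), \beta/(q-1)\}$. Following Caffarelli \cite{C91} and \cite{LQT}, for each ball $B_R(x_0)\Subset\Omega$ I split into two regimes using the scale-invariant threshold $\mu R^{q/(q-\gamma)}$ (with $\mu>0$ small universal), consistent with the intrinsic detachment rate of the Euler-Lagrange equation $\Div(\nabla H(\nabla u)) = \delta\gamma(u-\varphi)^{\gamma-1}$.

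In the \emph{near-free-boundary} regime $\sup_{B_R}(u-\varphi) \le \mu R^{q/(q-\gamma)}$, I compare $u$ with the minimizer $\tilde u$ of the pure $H$-obstacle problem \eqref{3.13} on $B_R$ sharing boundary data with $u$. Minimality $J_\delta(u)\le J_\delta(\tilde u)$ combined with the strict-convexity inequality \eqref{leq:HPQ} yields
\[\int_{B_R} G(|\nabla u - \nabla \tilde u|)\,dx \le C\delta \int_{B_R}\bigl|(\tilde u-\varphi)^\gamma-(u-\varphi)^\gamma\bigr|\,dx \le CR^{n + q\gamma/(q-\gamma)},\]
where the $L^\infty$ bound on $\tilde u-\varphi$ is secured by a maximum-principle comparison on $\partial B_R$ plus the Lipschitz growth of $\varphi$. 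Applying Lemma \ref{l3.5} to $\tilde u$ and Lemma \ref{l3.3} of Appendix \ref{A} to transfer the oscillation decay from $\tilde u$ to $u$ produces the target Campanato inequality in this regime.

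In the \emph{far-from-free-boundary} regime $\sup_{B_R}(u-\varphi) > \mu R^{q/(q-\gamma)}$, a non-degeneracy and continuity argument promotes the pointwise bound to $u-\varphi \ge cR^{q/(q-\gamma)}$ on a concentric ball $B_{R/2}(x_0)$, so the singular forcing $\delta\gamma(u-\varphi)^{\gamma-1}$ is bounded throughout that ball by $CR^{q(\gamma-1)/(q-\gamma)}$. I then compare $u$ with the minimizer $\hat u$ of $\int_{B_{R/2}} \bigl(H(\nabla v) - \nabla H(\nabla \varphi)\cdot\nabla v\bigr)\,dx$ carrying the same boundary data. Lemma \ref{l3.4} (with $f=\nabla H(\nabla \varphi) \in C^{0,\beta}$) provides a $C^{0,\beta/(q-1)}$-type decay for $\hat u$; the singular-forcing contribution to the comparison estimate is absorbed via Poincar\'e and H\"older, producing an extra error of order $R^{n+q\gamma/(q-\gamma)}$.

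In either regime I arrive at an inequality of the form
\[\int_{B_r} G(|\nabla u - (\nabla u)_r|)\,dx \le C\bigl(r/R\bigr)^{n+q\sigma}\int_{B_R} G(|\nabla u - (\nabla u)_R|)\,dx + CR^{n + q\alpha'},\]
with $\alpha' := \min\{\gamma/(q-\gamma),\,\beta/(q-1)\}$. A standard iteration lemma (e.g.\ \cite{GM12}) upgrades this to the Morrey bound with $\alpha = \min\{\sigma^-, \gamma/(q-\gamma), \beta/(q-1)\}$, and Campanato's characterization finishes the proof. The main obstacle I expect is the near-free-boundary case: securing the uniform $L^\infty$ control $\|\tilde u - \varphi\|_{L^\infty(B_R)} \le CR^{q/(q-\gamma)}$ and verifying that the singular $\gamma$-power absorbs cleanly into the exponent $q\gamma/(q-\gamma)$. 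Uniformity in $\delta \in [0,1]$ will be automatic, since $\delta$ enters only as a multiplicative factor bounded by~$1$.
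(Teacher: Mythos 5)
Your dichotomy introduces a genuine gap. In the far-from-free-boundary regime, the claim that $\sup_{B_R}(u-\varphi) > \mu R^{q/(q-\gamma)}$ can be ``promoted'' by a nondegeneracy and continuity argument to a uniform lower bound $u-\varphi \ge cR^{q/(q-\gamma)}$ on $B_{R/2}(x_0)$ is false in general: $u-\varphi$ can be large near $\partial B_R$ and yet vanish at the center $x_0$ (indeed, $x_0$ may be a free boundary point). Nondegeneracy estimates in these problems produce lower bounds on the $\sup$, not a way to convert an upper $\sup$ bound into a lower $\inf$ bound on a smaller ball. Without this, the singular forcing $\delta\gamma(u-\varphi)^{\gamma-1}$ is not bounded on $B_{R/2}$ and your comparison with $\hat u$ does not close. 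The near-free-boundary branch also rests on an $L^\infty$ bound $\|\tilde u-\varphi\|_{L^\infty(B_R)}\le CR^{q/(q-\gamma)}$ that does not follow directly from the maximum principle; the obstacle-problem solution with boundary data $u|_{\partial B_R}$ need not be trapped between $\varphi$ and $\varphi+\mu R^{q/(q-\gamma)}$ in the interior.

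The paper avoids the split entirely. Comparing $u$ with $h$, the solution of the pure $H$-obstacle problem on $B_R$ with boundary data $u$, and using the variational inequality plus \eqref{leq:HPQ}, one gets $c\int_{B_R}G(|\nabla u-\nabla h|)\,dx \le \delta\int_{B_R}[(h-\varphi)^\gamma-(u-\varphi)^\gamma]\,dx$. The key observation you missed is the subadditivity of $t\mapsto t^\gamma$ on $[0,\infty)$, which gives $(h-\varphi)^\gamma-(u-\varphi)^\gamma\le|u-h|^\gamma$ pointwise, with no $L^\infty$ control whatsoever. H\"older and Sobolev--Poincar\'e then bound $\int_{B_R}|u-h|^\gamma\,dx$ by $C|B_R|^{1-\gamma/q^*}(\int_{B_R}|\nabla u-\nabla h|^q\,dx)^{\gamma/q}$, which absorbs to give $\int_{B_R}|\nabla u-\nabla h|^q\,dx\le CR^{n+q\gamma/(q-\gamma)}$ uniformly in $\delta$, at every point and every scale simultaneously. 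The rest of your outline (transfer the decay from $h$ via Lemmas \ref{l3.5} and \ref{l3.3}, iterate with Lemma \ref{l2.1}, then Campanato) matches the paper, but the engine driving the estimate is this subadditivity trick, not a regime split.
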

	\begin{proof}
		Without loss of generality we may assume $B_R\subset\Omega$ for some $R>0$. If $h=h_R$ is the unique solution of the $H$-obstacle problem (see, for example, \cite{CLRT14, RT11})
		$$
		\min_{v\in\mathbb{K}_R}\int_{B_R}H(\nabla v)\,dx,
		$$
		where $\mathbb{K}_R$ is defined by \eqref{minimizationset}, then 
		$$
		\int_{B_R} H(\nabla h +t\nabla (u-h))\,dx
		$$	
		has minimum at $t=0$, therefore,
		$$
		\int_{B_R}\nabla H(\nabla h)\cdot\nabla (u-h)\,dx=\frac{d}{dt}\int_{B_R}H(\nabla h +t\nabla(u-h))\,dx\Big|_{t=0}\geq 0.
		$$
		The latter, combined with \eqref{leq:HPQ}, provides
		\begin{equation*}\label{3.18}
			\begin{aligned}
				c\int_{B_R}|\nabla u-\nabla h|^{q}\,dx&\leq c\int_{B_R}G(|\nabla u-\nabla h|)\,dx\\
				&\leq\int_{B_R}\left(H(\nabla u)-H(\nabla h)\right)\,dx,
			\end{aligned}
		\end{equation*}
		where $q$ is defined by \eqref{q}. On the other hand, since $u$ is a minimizer of $J_\delta$, using H\"older and Poincar\'e inequalities, we obtain
		\begin{equation*}
			\begin{aligned}
				\int_{B_R}\left(H(\nabla u)-H(\nabla h)\right)\,dx&\leq\delta \int_{B_R}\left[(h-\varphi)^\gamma- (u-\varphi)^\gamma\right]\,dx\\
				&\leq\delta\int_{B_R}|u-h|^\gamma\,dx\\
				&\leq C|B_R|^{1-\frac{\gamma}{q^*}} \left(\int_{B_R} |u-h|^{q^*}\,dx\right)^{\frac{\gamma}{q^*}}\\
				&\leq  C|B_R|^{1-\frac{\gamma}{q^*}}\left(\int_{B_R}|\nabla u-\nabla h|^{q}\,dx\right)^{\frac{\gamma}{q}},
			\end{aligned}
		\end{equation*}
		where 
		$$
		\frac{1}{q^*}=\frac{1}{q}-\frac{1}{n}.
		$$
		Then \eqref{3.17}, coupled with the last inequality, yields
		\begin{equation}\label{3.19}
			\int_{B_R}|\nabla u-\nabla h|^{q}\,dx\le C|B_R|^{\frac{q(q^*-\gamma)}{q^*(q-\gamma)}}=CR^{n+ q\frac{\gamma}{q-\gamma}},
		\end{equation}
		where the constant $C>0$ depends only on $n$ and $p$. Once again, arguing as in the proof of Lemma \ref{l3.4} (with $\kappa_2=0$ and $\kappa_1>0$ and with $\kappa_2>0$ and $\kappa_1=0$), the last estimate implies
		\begin{equation*}
			\begin{aligned}
				\int_{B_R}|\nabla u-(\nabla u)_{R}|^{q}\,dx&\le\int_{B_R}|\nabla h-(\nabla h)_{R}|^{q}\,dx\\
				&+CR^{n+q\frac{\gamma}{q-\gamma}}.
			\end{aligned}
		\end{equation*}
		The latter, combined with Lemma \ref{l3.5}, for $r\in(0,R)$ provides, for a $\sigma>0$,
		\begin{equation}\label{3.20}
			\begin{aligned}
				\int_{B_r}|\nabla u-(\nabla u)_{r}|^{q}\,dx&\leq C\left(\frac{r}{R}\right)^{n+q\sigma} \int_{B_R}|\nabla h-(\nabla h)_{R}|^{q}\,dx\\
				&+CR^{n+q\frac{\beta}{q-1}}+CR^{n+q\frac{\gamma}{q-\gamma}}.
			\end{aligned}
		\end{equation}
		Making use of Lemma \ref{l3.3} from Appendix \ref{A} (with $\kappa_2=0$ and $\kappa_1>0$ and with $\kappa_2>0$ and $\kappa_1=0$), \eqref{3.20} and \eqref{3.19}, we arrive at
		\begin{equation*}\label{3.21}
			\begin{aligned}
				\int_{B_r}|\nabla u-(\nabla u)_{r}|^{q}\,dx&\leq C\left(\frac{r}{R}\right)^{n+q\sigma} \int_{B_R}|\nabla u-(\nabla u)_{R}|^{q}\,dx\\
				&+CR^{n+q\frac{\beta}{q-1}}+CR^{n+q\frac{\gamma}{q-\gamma}},
			\end{aligned}
		\end{equation*}
		which, in terms of the non-negative and non-decreasing function
		$$
		\phi(r):=\int_{B_r}|\nabla u-(\nabla u)_{r}|^{q}\,dx,
		$$
		takes the form
		\begin{equation*}
			\phi(r)\le 
			C\left(\frac{r}{R}\right)^{n+q \sigma}\phi(R)+CR^{n+q\frac{\beta}{q-1}}+
			CR^{n+q\frac{\gamma}{q-\gamma}}.
		\end{equation*}
		Lemma \ref{l2.1} from Appendix \ref{B} then for any $r\le R$, guarantees
		$$
		\phi(r)\le 
		C\left[\left(\frac{r}{R}\right)^{n+q \alpha}\phi(R)+r^{n+q\alpha}\right],
		$$
		where $\alpha<\sigma$ and
		$$
		\alpha\le\min\left\{\frac{\gamma}{q-\gamma},\frac{\beta}{q-1}\right\}.
		$$
		Thus, for all $r\le\frac1 2\dist(0, \partial\Omega)$, we have
		$$
		\left(r^{-n-q\alpha}\int_{B_r}|\nabla u-(\nabla u)_{r}|^p\,dx\right)^{\frac{1}{q}} \leq C,
		$$
		where the constant $C>0$ depends only on $p$, $\alpha$, $\dist(0,\partial\Omega)$, $\|u\|_{L^{\infty}(\Omega)}$, $\|\varphi\|_{C^{1,\beta}(\Omega)}$ and $n$. The result then follows from Campanato's characterization of H\"older continuous functions (see, for example, \cite[Theorem 5.5]{GM12}).
	\end{proof}
	\begin{remark}\label{r5.1}
		Observe that for $H(\xi)=p^{-1}|\xi|^p$, the constant $\sigma>0$ is the H\"older exponent of the gradient for $p$-harmonic functions (see Lemma \ref{l3.2} in Appendix \ref{A}), and Theorem \ref{t3.1} reproduces the local regularity result \eqref{1.3} of \cite{LQT} for problems with non-trivial obstacles.
	\end{remark}
	
	\section{Small gradient estimates}\label{s6}
	Using Theorem \ref{t3.1}, we obtain sharp regularity at the free boundary points for minimizers of \eqref{1.1}. We distinguish two cases: when the gradient of a minimizer is relatively small and when its large. This section is devoted to the analysis of the first case. Observe that at the free boundary points gradient of the solution and that of the obstacle are equal, since $u-\varphi$ admits minimum at those points. This emphasizes that the case of zero obstacle (studied in \cite{P1}) cannot be adapted to work for general obstacles that may have large gradient at the free boundary. Our approach, however, makes use of geometric tangential analysis methods - leading to sharp regularity in a broader framework. 
	
	\medskip
	We start by observing that from Theorem \ref{t3.1}, applied for $H(\xi):=p^{-1}|\xi|^p$, we know that solutions of
	\begin{equation}\label{4.1}
		I_\delta(u)=\min_{v\in\mathbb{K}_1}I_\delta(v),
	\end{equation}
	where
	\begin{equation}\label{4.2}
		I_\delta(v):=\int_{B_1}\left(\frac{|\nabla v|^p}{p}+\delta(v-\varphi)^\gamma\right)\,dx,
	\end{equation}
	and $\mathbb{K}_1$ is defined by \eqref{minimizationset}, are of class $C_{\loc}^{1,\alpha}$ (uniform in  $\delta$), where $\alpha$ is given explicitly by \eqref{alpha}. As above, the function $\varphi\in C^{1,\beta}(B_1)$, i.e., there exists a universal constant $L>0$ such that
	\begin{equation}\label{L}
		\|\varphi\|_{C^{1,\beta}(B_{r/2})}\le L\|\varphi\|_{L^\infty(B_r)},
	\end{equation}
	for any $0<r\le1$. 
	
	\medskip
	The following two lemmas enable the ``tangential access'' to the sharp regularity information for the classical $p$-obstacle problem. To proceed, we define the ``flatness'' constant
	\begin{equation}\label{flatness}
		\mu:=\frac{1}{2^{1+\beta}C_1C_2L},
	\end{equation}
	where $C_1>0$ is the constant from Theorem \ref{t2.3} of Appendix \ref{B}, $C_2>0$ is the constant from Lemma \ref{l2.2} (both of them depend only on $p$ and $n$), $\beta\in(0,1]$ is the H\"older regularity exponent of $\nabla\varphi$, and $L>0$ is the universal constant in \eqref{L}.
	\begin{lemma}\label{l4.1}
		Let $u$ be a minimizer of \eqref{4.1}, $\|u\|_{L^\infty(B_1)}\le1$, $\varphi\in C^{1,\beta}(B_1)$, for some $\beta\in(0,1]$, $\|\varphi\|_{L^\infty(B_1)}\le\mu$, where $\mu>0$ is defined by \eqref{flatness}, $\varphi(0)=0$ and $0\in\partial\{u>\varphi\}$. Then for a given $\varepsilon\in(0,\frac 1 4)$, there exists $\delta_\varepsilon>0$, such that whenever $0\le\delta\le\delta_\varepsilon$ and
		$$
		|\nabla\varphi(0)|\le\delta_\varepsilon,
		$$
		then
		$$
		\sup_{B_\varepsilon}|u|\le\varepsilon^{1+\beta}.
		$$
	\end{lemma}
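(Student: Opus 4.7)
The plan is to proceed by contradiction, reducing the singular problem to the classical $p$-obstacle problem via a compactness argument, and then invoking the optimal free boundary regularity from \cite{ALS}. Suppose the conclusion fails for some $\varepsilon_0\in(0,\tfrac{1}{4})$. Then there exist sequences $\delta_k\to 0^+$, obstacles $\varphi_k\in C^{1,\beta}(B_1)$ with $\varphi_k(0)=0$, $|\nabla\varphi_k(0)|\le\delta_k$ and $\|\varphi_k\|_{L^\infty(B_1)}\le\mu$, and minimizers $u_k$ of $I_{\delta_k}$ over $\mathbb{K}_1$ with $\|u_k\|_{L^\infty(B_1)}\le 1$ and $0\in\partial\{u_k>\varphi_k\}$, such that $\sup_{B_{\varepsilon_0}}|u_k|>\varepsilon_0^{1+\beta}$.

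Applying Theorem \ref{t3.1} with $H(\xi)=p^{-1}|\xi|^p$ (uniformly in $\delta\in[0,1]$) yields a uniform bound $\|u_k\|_{C^{1,\alpha}(\overline{B_{3/4}})}\le C$. Along a subsequence, $u_k\to u_\infty$ in $C^1_{\loc}(B_1)$ and, by \eqref{L}, $\varphi_k\to\varphi_\infty$ in $C^1_{\loc}(B_1)$, with $\varphi_\infty\in C^{1,\beta}(B_1)$, $\varphi_\infty(0)=0$, $\nabla\varphi_\infty(0)=0$ and $\|\varphi_\infty\|_{L^\infty(B_1)}\le\mu$. Passing to the limit on the variational side, and using that $\delta_k\int_{B_1}(v-\varphi_k)^\gamma\,dx\to 0$ for any fixed admissible competitor $v$, one concludes that $u_\infty$ minimizes $\int_{B_1} p^{-1}|\nabla v|^p\,dx$ over the obstacle class associated with $\varphi_\infty$; in particular $u_\infty$ solves the classical $p$-obstacle problem, with $u_\infty(0)=\nabla u_\infty(0)=0$ and $0$ still a free boundary point.

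Next, I invoke the optimal regularity at free boundary points for the classical $p$-obstacle problem with $C^{1,\beta}$ obstacle (Theorem \ref{t2.3} of Appendix \ref{B}, from \cite{ALS}) to get $|u_\infty(x)-\varphi_\infty(x)|\le C_1\|\varphi_\infty\|_{C^{1,\beta}(B_1)}|x|^{1+\beta}$, and combine it with the Taylor-type bound $|\varphi_\infty(x)|\le C_2\|\varphi_\infty\|_{C^{1,\beta}(B_1)}|x|^{1+\beta}$ from Lemma \ref{l2.2} of Appendix \ref{B}. Inserting $\|\varphi_\infty\|_{C^{1,\beta}(B_{1/2})}\le L\mu$ from \eqref{L} and the definition \eqref{flatness} of $\mu$, one reads off $|u_\infty(x)|\le\tfrac{1}{2}|x|^{1+\beta}$ on $B_{1/2}$. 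Uniform convergence then forces $\sup_{B_{\varepsilon_0}}|u_k|\le\tfrac{3}{4}\varepsilon_0^{1+\beta}$ for all sufficiently large $k$, contradicting the standing assumption.

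The main obstacle is the limit identification step: although Theorem \ref{t3.1} grants strong $C^{1,\alpha}$-compactness for $\{u_k\}$, the singular source $\delta_k\gamma(u_k-\varphi_k)^{\gamma-1}$ can blow up near the contact set, so passing to the limit directly in the Euler-Lagrange equation is not viable. The natural remedy is to stay on the variational side and argue that every subsequential limit is a minimizer of the limiting (non-singular) energy, exploiting lower semicontinuity of the Dirichlet integral together with the uniform $L^1$ smallness $\delta_k\int_{B_1}(v-\varphi_k)^\gamma\,dx\to 0$ against admissible test functions.
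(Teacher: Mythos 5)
The compactness setup (contradiction, uniform $C^{1,\alpha}$ bounds from Theorem \ref{t3.1}, extraction of $u_\infty$, $\varphi_\infty$) matches the paper. The second half of your argument, however, has a genuine gap. First, a bookkeeping error: Theorem \ref{t2.3} of Appendix \ref{B} is the weak Harnack inequality for $p$-superharmonic functions, not a free-boundary regularity theorem, and Lemma \ref{l2.2} is the $L^q$-to-sup estimate for $p$-superharmonic functions, not a Taylor estimate for $C^{1,\beta}$ obstacles. The appendix contains no ALS-type statement, so the citations you lean on do not exist in this paper.

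More substantively, even if one quotes \cite{ALS} directly, the optimal detachment rate for the $p$-obstacle problem at a free boundary point is $|x|^{1+\min\{\beta,\,1/(p-1)\}}$, not $|x|^{1+\beta}$. The $1/(p-1)$ term comes from the H\"older exponent of the gradient of $p$-harmonic functions and is active here precisely because $\nabla u_\infty(0)=\nabla\varphi_\infty(0)=0$. Whenever $\beta>1/(p-1)$ the ALS exponent is strictly worse than $\beta$, so the bound $|u_\infty(x)|\le\tfrac12|x|^{1+\beta}$ you claim does not follow and the contradiction with $\sup_{B_{\varepsilon_0}}|u_\infty|>\varepsilon_0^{1+\beta}$ is not reached. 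It is also not necessary (nor shown) that $u_\infty$ minimizes the constrained $p$-Dirichlet energy in the limit: the paper establishes only the weaker fact that $u_\infty$ is $p$-superharmonic, by testing $I_{\delta_k}(u_k)\le I_{\delta_k}(u_k+\epsilon v)$ against nonnegative $v$.

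The missing idea is the role of the flatness constant $\mu$. The paper rescales $u_*(x)=u_\infty(\lambda x)/\lambda^{1+\beta}$ with $\lambda=2\varepsilon_0$, observes that $u_*+\mu L\ge0$ and $u_*(0)=0$, then applies the weak Harnack inequality (Theorem \ref{t2.3}) to get $\|u_*+\mu L\|_{L^s(B_{3/4})}\le C_1\mu L$, followed by the $L^q$-to-sup estimate (Lemma \ref{l2.2}) to conclude $\sup_{B_{1/2}}u_*\le C_1C_2\mu L$. The constant $\mu$ in \eqref{flatness} is chosen exactly so that $C_1C_2L\mu=2^{-(1+\beta)}$, which after un-rescaling produces $\sup_{B_{\varepsilon_0}}|u_\infty|\le\varepsilon_0^{1+\beta}$ with the $\beta$-exponent and constant $1$. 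This route bypasses the $p$-harmonic regularity exponent entirely; an approach via ALS cannot.
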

	\begin{proof}
		We argue by contradiction and assume that the conclusion of the lemma fails to hold. Thus, we assume that there exist $\varepsilon_0>0$, $\delta=\delta_k$, minimizer $u_k$ of $I_{\delta_k}$ with obstacle $\varphi=\varphi_k\in C^{1,\beta}(B_1)$, such that
		$$
		0\in\partial\{u_k>\varphi_k\}, \quad \varphi_k(0)=0, \quad \|u_k\|_{L^\infty(B_1)} \leq 1,\quad\|\varphi_k\|_{L^\infty(B_1)}\le\mu
		$$
		and
		$$
		\delta_k\le1/k,\quad|\nabla \varphi_k(0)|\le 1/k,
		$$
		but
		\begin{equation*}\label{4.5}
			\sup_{B_{\varepsilon_0}}|u_k|>\varepsilon_0^{1+\beta}.
		\end{equation*}
		Theorem \ref{t3.1}, combined with the Arzel\`a-Ascoli theorem, guarantees the existence of functions $u_\infty$, $\varphi_\infty$, such that up to a sub-sequence, as $k\to\infty$,
		\begin{equation}\label{4.6}
			u_k\rightarrow u_{\infty}\,\textrm{ locally uniformly in }\,C^{1,\alpha}(B_1)
		\end{equation}
		and 
		$$
		\varphi_k\rightarrow \varphi_\infty\,\text{ locally uniformly in }\; C^{1,\beta}(B_1).
		$$
		Note that $u_{\infty}\ge \varphi_\infty$, $u_{\infty}(0)=\varphi_\infty(0)=|\nabla\varphi_\infty(0)|=0$, $\|\varphi_\infty\|_{L^\infty(B_1)}\le\mu$ and
		\begin{equation}\label{4.7}
			\sup_{B_{\varepsilon_0}}|u_{\infty}|>\varepsilon_0^{1+\beta}.
		\end{equation}
		Moreover, since
		$$
		I_{\delta_k}(u_k) \leq   I_{\delta_k}(u_k+\epsilon v) \quad \forall \epsilon>0,\,\,\,\forall v\in C_0^\infty(B_1), \,\,\,v\ge0,
		$$
		then, in view of \eqref{4.6}, as $\delta_k\to0$, one has
		$$
		I_{0}(u_\infty) \leq   I_{0}(u_\infty+\epsilon v),
		$$
		where
		$$
		I_0(w):=\int_{B_1}\frac{|\nabla w|^p}{p}\,dx,
		$$
		which yields 
		$$
		\int_{B_1}|\nabla u_{\infty}|^{p-2}\nabla u_{\infty}\cdot\nabla v\, dx \geq 0,  \quad \forall  \; v\ge 0, \; v\in C_0^{\infty}(B_{1}),
		$$
		i.e., $\Delta_p u_{\infty} \leq 0$ in $B_1$, that is, $u_{\infty}$ is $p$-superharmonic in $B_{1}$. The proof now follows invoking ideas similar to those used in \cite[Theorem 1]{ALS}. Setting $\lambda:=2\varepsilon_0$, we define
		$$
		u_*(x)=\frac{u_\infty(\lambda x)}{\lambda^{1+\beta}} 
		\quad \mbox{and} \quad
		\varphi_*(x)=\frac{\varphi_\infty(\lambda x)}{\lambda^{1+\beta}}\quad \mbox{in } \; B_1.
		$$
		Then $u_*$ is $p$-superharmonic and satisfies $u_*\ge\varphi_*$. Also, as $\varphi_\infty\in C^{1,\beta}(B_1)$, recalling \eqref{L}, we write
		\begin{equation*}
			\sup_{B_\lambda}|\varphi_\infty(x)-\varphi_\infty(0)-\nabla\varphi_\infty(0)\cdot x|\le L\|\varphi_\infty\|_{L^\infty(B_{2\lambda})}\lambda^{1+\beta}\le L\mu\lambda^{1+\beta}.
		\end{equation*}
		But  $\varphi_\infty(0)=|\nabla \varphi_\infty(0)|=0$, 
		and so
		$$
		\sup_{B_\lambda}|\varphi_\infty|\le\mu L\lambda^{1+\beta}.
		$$
		Therefore,	\begin{equation*}		
			\|\varphi_*\|_{L^\infty(B_1)}=\frac{\|\varphi_\infty\|_{L^\infty(B_\lambda)}}{\lambda^{1+\beta}}\le\mu L.
		\end{equation*}
		Thus,
		$$
		u_*+\mu L\geq -\|\varphi_*\|_{L^\infty(B_1)}+\mu L\geq0,
		$$
		and the weak Harnack inequality (Theorem \ref{t2.3} of Appendix \ref{B}), provides
		\begin{equation}\label{4.8}
			\begin{array}{rcl}
				\|u_*+\mu L\|_{L^s(B_{3/4})}&\le& C_1\displaystyle\inf_{B_{1/2}}\left(u_*+\mu L\right)\\
				&\le&C_1(\varphi_*(0)+\mu L)\\
				&=&C_1\mu L,
			\end{array}
		\end{equation}
		for some universal $s>1$. Here $C_1>0$ is a constant depending only on $p$ and $n$. Since the function
		$$
		w:=\max\left(u_*+\mu L,\,\sup_{B_1}\varphi_*+\mu L\right)
		$$
		is $p$-superharmonic in $B_1$, applying Lemma \ref{l2.2} of Appendix \ref{B}, we obtain
		\begin{equation}\label{4.9}
			\sup_{B_{1/2}}w\le C_2\|w\|_{L^s(B_{3/4})},
		\end{equation}
		where $C_2>0$ is a constant depending only on $p$ and $n$. 
		Combining \eqref{4.8} and \eqref{4.9}, we deduce
		\begin{equation}\label{4.10}
			\sup_{B_{1/2}}u_*\le\mu C_1C_2L
		\end{equation}
		As also $u_*\ge-\|\varphi_*\|_{L^\infty(B_1)}\ge-\mu L$, recalling \eqref{flatness}, from \eqref{4.10} we get
		$$
		\sup_{B_{1/2}}|u_*|\le\frac{1}{2^{1+\beta}}.
		$$
		Consequently,
		$$
		\sup_{B_{\varepsilon_0}}|u_\infty|\le\varepsilon_0^{1+\beta},
		$$
		which contradicts \eqref{4.7}.
	\end{proof}
	The next result provides a discrete version of the desired oscillation estimate.
	\begin{lemma}\label{l4.2}	
		Let $u$ be a minimizer of \eqref{4.1}, $\|u\|_{L^\infty(B_1)}\le1$, $\varphi\in C^{1,\beta}(B_1)$, for some $\beta\in(0,1]$, $\|\varphi\|_{L^\infty(B_1)}\le\frac{\mu}{2L}$, where $\mu>0$ is defined by \eqref{flatness}, $\varphi(0)=0$ and $0\in\partial\{u>\varphi\}$. Then there exists $\delta_0>0$ such that whenever $0\le\delta\leq\delta_0$ and
		\begin{equation*}\label{4.11}
			|\nabla\varphi(0)|\le\frac{\delta_0}{8^{\tau (k-1)}}
		\end{equation*}
		for some integer $k>0$, then
		$$
		\sup_{B_{1/8^k}}|u|\le\frac{1}{8^{(1+\tau)k}},
		$$
		where \begin{equation}\label{tau}
			\tau:=\min\left\{\beta,\frac{\gamma}{p-\gamma}\right\}.
		\end{equation}
	\end{lemma}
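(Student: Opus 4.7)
\medskip

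\noindent\textbf{Proof proposal.} My plan is to argue by induction on $k$, with the inductive step reduced to a single application of Lemma \ref{l4.1} after an appropriate rescaling. Set $\delta_0:=\min\{\delta_{1/8},\,\mu/2\}$, where $\delta_{1/8}$ is the threshold constant produced by Lemma \ref{l4.1} for the choice $\varepsilon=1/8$. For the base case $k=1$, the hypotheses of Lemma \ref{l4.1} are exactly our assumptions (using $\|\varphi\|_{L^\infty(B_1)}\le\mu/(2L)\le\mu$ and $|\nabla\varphi(0)|\le\delta_0\le\delta_{1/8}$), so the lemma gives $\sup_{B_{1/8}}|u|\le(1/8)^{1+\beta}\le(1/8)^{1+\tau}$ because $\tau\le\beta$.

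\medskip

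For the step $k\mapsto k+1$, assume $\sup_{B_{8^{-k}}}|u|\le 8^{-(1+\tau)k}$ and define the rescaled functions
$$
u_k(x):=8^{(1+\tau)k}\,u(8^{-k}x),\qquad \varphi_k(x):=8^{(1+\tau)k}\,\varphi(8^{-k}x),\qquad x\in B_1.
$$
A direct computation shows that $u_k$ is a minimizer of $I_{\tilde\delta}$ on $\mathbb{K}_1$ with obstacle $\varphi_k$, where $\tilde\delta=\delta\cdot 8^{k[\tau(p-\gamma)-\gamma]}$. The point of choosing $\tau=\min\{\beta,\gamma/(p-\gamma)\}$ is precisely that $\tau(p-\gamma)-\gamma\le 0$, hence $\tilde\delta\le\delta\le\delta_0\le\delta_{1/8}$. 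The inductive hypothesis gives $\|u_k\|_{L^\infty(B_1)}\le 1$; clearly $\varphi_k(0)=0$, $0\in\partial\{u_k>\varphi_k\}$, and the chain rule gives $|\nabla\varphi_k(0)|=8^{\tau k}|\nabla\varphi(0)|\le\delta_0$ by the hypothesis $|\nabla\varphi(0)|\le\delta_0/8^{\tau k}$.

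\medskip

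The one verification that requires care — and which I expect to be the main technical point — is the $L^\infty$ bound $\|\varphi_k\|_{L^\infty(B_1)}\le\mu$. For this I would Taylor-expand $\varphi$ around $0$: since $\varphi\in C^{1,\beta}$ with $\varphi(0)=0$, and since by \eqref{L}
$$
[\nabla\varphi]_{C^{0,\beta}(B_{1/2})}\le L\|\varphi\|_{L^\infty(B_1)}\le \tfrac{\mu}{2},
$$
one obtains, for $|y|\le 8^{-k}$,
$$
|\varphi(y)|\le |\nabla\varphi(0)|\,|y|+\tfrac{\mu}{2}|y|^{1+\beta}\le \tfrac{\delta_0}{8^{\tau k}}\,8^{-k}+\tfrac{\mu}{2}\,8^{-k(1+\beta)}.
$$
Multiplying by $8^{(1+\tau)k}$ yields $\|\varphi_k\|_{L^\infty(B_1)}\le\delta_0+\tfrac{\mu}{2}\,8^{k(\tau-\beta)}\le\delta_0+\tfrac{\mu}{2}\le\mu$, using $\tau\le\beta$ and $\delta_0\le\mu/2$. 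Thus Lemma \ref{l4.1} applies to $u_k$ with $\varepsilon=1/8$ and gives $\sup_{B_{1/8}}|u_k|\le(1/8)^{1+\beta}\le(1/8)^{1+\tau}$. Translating back through the definition of $u_k$ produces
$$
\sup_{B_{8^{-(k+1)}}}|u|\le 8^{-(1+\tau)k}\cdot 8^{-(1+\tau)}=8^{-(1+\tau)(k+1)},
$$
which closes the induction. The delicate step is balancing all the scaling exponents so that both $\tilde\delta$ and $\|\varphi_k\|_{L^\infty(B_1)}$ remain under control uniformly in $k$; this is where the precise form of $\tau$ in \eqref{tau} and the hypothesis $|\nabla\varphi(0)|\le\delta_0/8^{\tau(k-1)}$ are tuned to one another.
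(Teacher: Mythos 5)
Your proof is correct and follows essentially the same inductive argument as the paper, with the same choice of $\delta_0=\min\{\delta_{1/8},\mu/2\}$, the same rescaling by $8^{(1+\tau)k}$, and the same Taylor expansion of $\varphi$ around the origin to verify that the rescaled obstacle remains $\mu$-flat. One small remark: your computed exponent for the rescaled singular coefficient, $\tilde\delta=\delta\cdot 8^{k[\tau(p-\gamma)-\gamma]}$, is in fact the correct one (the paper's displayed formula $\delta:=\tilde\delta\, 8^{(\tau\gamma-p(1-\tau))j}$ appears to carry an algebraic slip, though the intended inequality $\tilde\delta\le\delta\le\delta_0$ is the same).
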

	\begin{proof}
		We argue inductively. The case of $k=1$ follows from Lemma \ref{l4.1}. Indeed, let $\varepsilon=\frac 1 8$ in Lemma \ref{l4.1} and choose
		\begin{equation}\label{choice}
			\delta_0:=\min\left\{\delta_\varepsilon,\frac \mu 2\right\},
		\end{equation}
		where the constants $\mu>0$ and $\delta_\varepsilon>0$ are as in the Lemma \ref{l4.1}. The latter insures that whenever $0\le\delta\le\delta_0$ and
		$$
		|\nabla\varphi(0)|\le\delta_0,
		$$
		then (recall that $\tau\le\beta$)
		$$
		\sup_{B_{1/8^k}}|u|\le\frac{1}{8^{1+\beta}}\le\frac{1}{8^{1+\tau}}.
		$$
		We now suppose that conclusion of the Lemma holds for $k=j>1$ and aim to conclude that it holds also for $k=j+1$. Thus, we assume  
		$$
		|\nabla\varphi(0)|\le \frac{\delta_0}{8^{\tau j}},
		$$
		and aim to conclude that
		\begin{equation}\label{induction}
			\sup_{B_{1/8^{j+1}}}|u|\le\frac{1}{8^{(1+\tau)(j+1)}}.
		\end{equation}	
		Observe that the definition of $\tau$ guarantees that
		$$
		\tilde{u}(x):= 8^{(1+\tau)j}u\left(\frac{x}{8^j}\right),\quad x\in B_1,
		$$
		is a minimizer of $I_{\tilde{\delta}}$ (for a $0\le\tilde{\delta}\le\delta_0$) with the obstacle 
		$$
		\tilde{\varphi}(x):= 8^{(1+\tau)j}\varphi\left(\frac{x}{8^j}\right),\quad x\in B_1.
		$$
		Indeed, set $\delta:=\tilde{\delta}8^{(\tau\gamma-p(1-\tau))j}$. Note that \eqref{tau} implies $\delta<\delta_0$. As $u$ is a minimizer of \eqref{4.1} and
		\begin{equation*}
			\begin{aligned}
				I_{\tilde{\delta}}(\tilde{u})&=\int_{B_1}\left(\frac{|\nabla \tilde{u}|^p}{p}+\tilde{\delta}(\tilde{u}-\tilde{\varphi})^\gamma\right)\,dx\\
				&=8^{-(n+(1-\tau)p)j}\int_{B_{1/8^j}}\left(\frac{|\nabla u|^p}{p}+\delta(u-\varphi)^\gamma\right)\,dx,
			\end{aligned}
		\end{equation*}
		then $\tilde{u}$ is a minimizer of $I_{\tilde{\delta}}$ over the functions that stay above $\tilde{\varphi}$.
		
		To apply the previous lemma for the pair $\tilde{u}$, $\tilde{\varphi}$, we make sure its assumptions are satisfied. By the introductory assumption
		$$
		\sup_{B_{1/8^{j}}}|u|\le\frac{1}{8^{(1+\tau)j}},
		$$
		therefore	
		$$
		\|\tilde{u}\|_{L^\infty(B_1)} \leq 1. 
		$$
		On the other hand, as $\varphi\in C^{1,\beta}(B_1)$ and $\varphi(0)=0$, recalling \eqref{L}, \eqref{tau}, \eqref{choice}, one has
		$$
		\sup_{B_{1/8^j}}\left|\varphi(x)-\nabla\varphi(0)\cdot x\right|\le\frac{\mu L}{2L\cdot8^{(1+\beta)j}}\le\frac{\mu}{2\cdot8^{(1+\tau)j}}.
		$$
		Hence,
		\begin{equation*}
			\begin{aligned}
				\|\tilde{\varphi}\|_{L^\infty(B_1)}&=8^{(1+\tau)j}\sup_{B_{1/8^j}}|\varphi|\\
				&\le8^{(1+\tau)j}\left[\frac{\mu}{2\cdot8^{(1+\tau)j}}+\frac{|\nabla\varphi(0)|}{8^j}\right]\\
				&\le\frac{\mu}{2}+\delta_0\\
				&\le\mu.
			\end{aligned}
		\end{equation*}	
		Also,  
		$$
		|\nabla\tilde{\varphi}(0)|\le\delta_0.
		$$
		Lemma \ref{l4.1} then implies, for $\varepsilon=\frac 1 8$,
		$$
		\sup_{B_{1/8}}|\tilde{u}|\le\frac{1}{8^{1+\tau}}.
		$$
		The latter gives \eqref{induction}. 
	\end{proof}

	We are now ready to prove the main result of this section.
	\begin{theorem}\label{t4.1}
		Let $\Omega\subset\mathbb{R}^n$ be a bounded domain, $2\leq p<+\infty$, $\varphi\in C^{1,\beta}(\Omega)$, for a $\beta\in(0,1]$ and $g\in W^{1,p}(\Omega)$. If $u$ is a solution of \eqref{1.1}, and $x_0\in \partial\{u>\varphi\}\cap\Omega$, then there exist positive universal constants $\kappa$, $C$ and $\rho_0$, depending only on $p$, $\gamma$, $\dist(x_0,\partial\Omega)$, $\|u\|_{L^{\infty}(\Omega)}$ and $\|\varphi\|_{C^{1,\beta}(\Omega)}$, such that whenever
		\begin{equation}\label{4.14}
			|\nabla\varphi(x_0)|\le\kappa\rho^{\tau}, 
		\end{equation}
		for $0<\rho<\rho_0$, then
		$$
		\sup_{B_{\rho}}|u(x)-u(x_0)-\nabla u(x_0)\cdot(x-x_0)|\le C \rho^{1+\tau},
		$$	
		where $\tau=\min\left\{\beta,\frac{\gamma}{p-\gamma}\right\}$. Consequently,
		$$
		\sup_{y\in B_{\rho}(x_0)}|u(y)-\varphi(x_0)| \leq C\rho^{1+\tau}.
		$$	
	\end{theorem}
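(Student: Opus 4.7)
The strategy is to translate and dyadically rescale at $x_0$ to reach the normalized setting of Lemma \ref{l4.2}, and then decode its discrete decay at the scale matching $\rho$.

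First I would translate $x_0=0$ and, since $x_0\in\partial\{u>\varphi\}$ yields $u(x_0)=\varphi(x_0)$, subtract a constant to assume $\varphi(0)=u(0)=0$. A dilation
$$
\tilde u(x):=\frac{u(Rx)}{M},\qquad \tilde\varphi(x):=\frac{\varphi(Rx)}{M},\qquad x\in B_1,
$$
turns $\tilde u$ into a minimizer of $I_{\tilde\delta}$ on the corresponding class $\mathbb{K}_1$ with $\tilde\delta=R^{p}M^{\gamma-p}$. Since $p-\gamma>0$, I can choose $R$ small and $M$ large (quantitatively in terms of $\dist(x_0,\partial\Omega)$, $\|u\|_{L^{\infty}(\Omega)}$ and $\|\varphi\|_{C^{1,\beta}(\Omega)}$) to simultaneously enforce $\tilde\delta\le\delta_0$, $\|\tilde u\|_{L^{\infty}(B_1)}\le 1$, $\|\tilde\varphi\|_{L^{\infty}(B_1)}\le\mu/(2L)$, $\tilde\varphi(0)=0$ and $0\in\partial\{\tilde u>\tilde\varphi\}$ --- precisely the standing hypotheses of Lemma \ref{l4.2}.

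For $\rho\in(0,\rho_0)$ with $\rho_0\le R$, I would then pick the unique integer $k\ge 1$ with $R\cdot 8^{-(k+1)}<\rho\le R\cdot 8^{-k}$. Because $\nabla\tilde\varphi(0)=(R/M)\nabla\varphi(0)$, an elementary computation shows that choosing $\kappa>0$ small enough (depending on $R,M,\delta_0,\tau$) forces $|\nabla\varphi(0)|\le\kappa\rho^{\tau}$ to imply $|\nabla\tilde\varphi(0)|\le\delta_0/8^{\tau(k-1)}$. Lemma \ref{l4.2} then yields $\sup_{B_{1/8^k}}|\tilde u|\le 8^{-(1+\tau)k}$; unscaling and using $B_\rho\subset B_{R/8^k}$ gives
$$
\sup_{B_\rho(x_0)}|u-u(x_0)|\le M\cdot 8^{-(1+\tau)k}\le C\rho^{1+\tau}.
$$
The $C^{1,\alpha}$-regularity from Theorem \ref{t3.1} applied to the obstacle problem, together with $u-\varphi\ge 0$ attaining its minimum at $x_0$, yields $\nabla u(x_0)=\nabla\varphi(x_0)$, so $|\nabla u(x_0)\cdot(x-x_0)|\le\kappa\rho^{1+\tau}$ on $B_\rho(x_0)$. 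The triangle inequality then delivers the main estimate, and the consequence $\sup_{B_\rho(x_0)}|u-\varphi(x_0)|\le C\rho^{1+\tau}$ is immediate.

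The principal obstacle is the careful scaling bookkeeping in the first step: one must simultaneously satisfy three competing normalizations ($\tilde\delta\le\delta_0$, the $L^{\infty}$ cap on $\tilde u$, and the flatness cap on $\tilde\varphi$) while guaranteeing that the resulting constants $\kappa,C,\rho_0$ depend only on the allowed quantities. Once the normalization is correctly tuned, the dyadic decoding of Lemma \ref{l4.2} in the remaining steps is essentially mechanical.
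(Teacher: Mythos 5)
Your proposal is correct and follows essentially the same route as the paper's proof: normalize at $x_0$, rescale amplitude by $M$ (and, in your version, domain by $R$) to land in the hypotheses of Lemma~\ref{l4.2}, pick the dyadic scale $8^{-k}$ matching $\rho$, decode the discrete decay, and unscale. The paper compresses the domain dilation into the phrase ``WLOG $\Omega=B_1$'' and leaves the final triangle-inequality step (handling $\nabla u(x_0)\cdot(x-x_0)$ via $\nabla u(x_0)=\nabla\varphi(x_0)$ and the smallness hypothesis \eqref{4.14}) implicit; you make both explicit, and your bookkeeping of $\tilde\delta=R^pM^{\gamma-p}$ is actually more careful than the paper's (which only tracks the effect of $M$). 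Your choice of base $8^{-k}$ also matches Lemma~\ref{l4.2} more cleanly than the $2^{-k}$ written in the paper.
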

	\begin{proof}
		Without loss of generality, we may assume that $x_0=0$, $\varphi(0)=0$ and $\Omega=B_1$. Let $\delta_0$ be as in Lemma \ref{l4.2} and set
		$$
		\tilde{u}(x):=\frac{u(x)}{M}\quad \mbox{and} \quad \tilde{\varphi}(x):=\frac{\varphi(x)}{M},
		$$
		for a constant
		$$
		M\geq\max \left\{\|u\|_{L^\infty(B_1)}, \delta_0^{\frac{1}{\gamma-p}}\right\}.
		$$
		Observe that $\tilde{u}$ is a minimizer for $I_{\delta_0}$ with obstacle $\tilde{\varphi}$. Also, 
		$\|\tilde{u}\|_{L^{\infty}(B_1)}\leq1$ and $\|\tilde{\varphi}\|_{L^\infty(B_1)}\le\delta_0$. Using \eqref{4.14}, for $\kappa>0$ small enough, one has
		\begin{equation}\label{4.15}
			|\nabla\tilde{\varphi}(0)|\leq\frac{\kappa}{M}r^\tau\leq\delta_0r^\tau.
		\end{equation}
		Now if $0<r\leq\frac 1 4$, we choose $k\in\mathbb{N}$ such that 
		$$
		2^{-(k+1)}<r\leq 2^{-k}.
		$$
		By Lemma \ref{l4.2}, \eqref{4.15} implies
		$$
		\sup_{B_{r}}|\tilde{u}|\le \sup_{B_{2^{-k}}}|\tilde{u}|\leq2^{-k (1+\tau)}\leq2^{(1+\tau)}r^{1+\tau}.
		$$
		Consequently, 
		$$
		\sup_{B_{r}}|u|\le Cr^{1+\tau},
		$$
		for a constant $C>0$ depending only on $p$, $\gamma$, $\|\varphi\|_{C^{1,\beta}(B_1)}$ and $\|u\|_{L^{\infty}(B_1)}$. Recalling Theorem \ref{existence}, note that the last dependence of $C$ on $\|u\|_{L^{\infty}(B_1)}$ can be replaced by $\|g\|_{W^{1,p}(B_1)}$.
	\end{proof}
	
	\section{Scaling adjustment}\label{s8}
	Our next goal is to obtain sharp regularity for minimizers of \eqref{1.1} at the free boundary points, where the gradient of the minimizer is large. The intuition behind the proof is that the problem should behave essentially as an obstacle problem, governed by a uniformly elliptic operator. Unlike the (classical) obstacle problem for $p$-Laplacian, \cite{ALS}, when the solution $u$ can be interpreted as the minimal solution of a uniformly elliptic equation with a boundded right hand side, in singular setting it is not clear whether it will solve the corresponding equation, as our functional is not convex in $u$. Moreover, even if one is able to conclude that $u$ indeed solves \eqref{EL}, as the right hand side in \eqref{EL} blows up at the free boundary points, we still would not be able to use the elliptic regularity theory.
	
	In this section, to circumvent these difficulties, we use a scaling argument. The idea is to scale the terms near the free boundary points so the problem looks like a linear elliptic equation. One technical difficulty in this approach is that when scaling (since the gradient in this case is bounded away from zero), the corresponding linear terms blow up, as the scaling goes to infinity. To avoid it, we subtract the linear part of the gradient in the functional. This adjusted scaling then ensures that in the limit, we arrive at a linear elliptic problem without the (zero order) singular term.
	
	To proceed, for $\xi$, $a\in\R^n$, we define
	$$
	\tilde{H}(\xi):=\frac{1}{p\varepsilon^2}\left(|\varepsilon\xi+a|^p-|a|^p-p\varepsilon|a|^{p-2} a\cdot\xi\right).
	$$
	\begin{lemma}\label{l5.1}
		Let $\varepsilon$, $\delta\in(0,1]$, $R>0$, $\varphi\in C^{1,\beta}(B_R)$, $g\in W^{1,p}(B_R)$, $\kappa_0>0$ and $a\in\R^n$ be such that $|a|>\kappa_0$. If  $u\in W^{1, p}(B_R)$ is a minimizer of 
		$$
		\int_{B_R}\left(\tilde{H}(\nabla v)+\delta (v-\varphi)^\gamma\right)\,dx,
		$$
		over the set $\mathbb{K}_R$, defined by \eqref{minimizationset}, such that
		$$
		|\varepsilon\nabla u+a|>\kappa_0,
		$$
		then there exist $C>0$ and $\alpha\in(0,1)$ constants, depending only on $\kappa_0$, $\|\varphi\|_{C^{1, \beta}(B_R)}$ and  $\|u\|_{L^{\infty}(B_R)}$, such that
		$$
		\|u\|_{C^{1,\alpha}(B_{R/2})}\le C.
		$$
	\end{lemma}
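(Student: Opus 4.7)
The plan is to exploit the hypothesis $|\varepsilon \nabla u + a| > \kappa_0$, which keeps $\nabla u$ bounded away from the unique degeneracy point $\xi = -a/\varepsilon$ of $\tilde H$, and to reduce the lemma to a direct application of Theorem \ref{t3.1} after replacing $\tilde H$ with a suitably modified Lagrangian. Direct computation gives
\[
D^2 \tilde H(\xi) = |\varepsilon\xi + a|^{p-2}I + (p-2)|\varepsilon\xi + a|^{p-4}(\varepsilon\xi+a)\otimes(\varepsilon\xi+a),
\]
whose eigenvalues $|\varepsilon\xi+a|^{p-2}$ and $(p-1)|\varepsilon\xi+a|^{p-2}$ are bounded below by $\kappa_0^{p-2}$ on $\Sigma := \{\xi \in \R^n : |\varepsilon\xi+a| \geq \kappa_0/2\}$, so $\tilde H$ already verifies the structural assumptions \eqref{assumptions} restricted to $\Sigma$.

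Next, I would construct a modified Lagrangian $\hat H \in C^2(\R^n)$ satisfying: (i) $\hat H \equiv \tilde H$ on $\Sigma$; (ii) $\hat H \geq \tilde H$ pointwise on $\R^n$; (iii) $\hat H$ verifies the structural assumptions \eqref{assumptions} on all of $\R^n$. Writing $\tilde H(\xi) = \tilde F(\varepsilon\xi+a)/\varepsilon^2$ with $\tilde F(y) := |y|^p/p - |a|^p/p - |a|^{p-2}a\cdot(y-a)$, the task reduces to extending $\tilde F|_{\{|y|\geq\kappa_0/2\}}$ into the small ball $\{|y| < \kappa_0/2\}$ by a smooth, strictly convex bridge matching $\tilde F$ in $C^2$ across the spherical interface and dominating $\tilde F$ inside. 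Since $\tilde F$ is smooth on the closed ball and $\tilde F(0) = (p-1)|a|^p/p > 0$, this is a routine $C^2$-extension/mollification argument.

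The crucial observation is then that $u$ remains a minimizer of the modified energy $\hat J_\delta(v) := \int_{B_R}(\hat H(\nabla v) + \delta(v-\varphi)^\gamma)\,dx$ on $\mathbb{K}_R$. Indeed, by hypothesis $\nabla u(x) \in \Sigma$ for a.e.\ $x \in B_R$, so $\hat H(\nabla u) = \tilde H(\nabla u)$ a.e.; combined with the pointwise bound $\hat H \geq \tilde H$ and the minimality of $u$ for the $\tilde H$-functional, one obtains for every $v \in \mathbb{K}_R$
\[
\hat J_\delta(u) = \int_{B_R}\!(\tilde H(\nabla u)+\delta(u-\varphi)^\gamma)\,dx \leq \int_{B_R}\!(\tilde H(\nabla v)+\delta(v-\varphi)^\gamma)\,dx \leq \hat J_\delta(v).
\]
Theorem \ref{t3.1} applied to $\hat J_\delta$ then delivers the desired $\|u\|_{C^{1,\alpha}(B_{R/2})} \leq C$.

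The main technical difficulty will be the quantitative bookkeeping in the second step: one must arrange the structural parameters $\Upsilon$, $\Lambda$, $\lambda$, $\kappa_1$, $\kappa_2$ appearing in \eqref{assumptions} for $\hat H$ to be controlled solely by the data listed in the lemma, so that the $C^{1,\alpha}$-constant produced by Theorem \ref{t3.1} inherits the asserted dependence. Once a clean extension is fixed, this reduces to explicitly tracking constants through the $C^2$ bridging and verifying that the structural bounds for $\tilde H$ on $\Sigma$ carry over to $\hat H$ on $\R^n$.
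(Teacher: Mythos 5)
Your approach matches the paper's proof: the paper's auxiliary function $H_0$ from Appendix \ref{A} (built from $h(z)=|z|^p+\nu|z|^2(\kappa_0^2-|z|^2)^3\chi_{\{|z|\le\kappa_0\}}$, with $\nu$ small) is precisely the modified Lagrangian $\hat H$ you postulate --- it equals $\tilde H$ where $|\varepsilon\xi+a|>\kappa_0$, dominates $\tilde H$ pointwise, and satisfies \eqref{assumptions} with $\kappa_1,\kappa_2$ controlled by $p$, $\kappa_0$ and $|a|$ --- and the ``$u$ still minimizes the modified energy'' comparison you give is verbatim the paper's argument before invoking Theorem \ref{t3.1}. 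The only loose end in your sketch (the quantitative bookkeeping for the $C^2$ bridge) is exactly what the paper's explicit choice of $h$ settles.
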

	\begin{proof}
		Observe that 
		\begin{equation*}\label{6.1}
			H_0(\xi)\ge\tilde{H}(\xi),
		\end{equation*}
		where the function $H_0$ is defined by \eqref{H} of Appendix \ref{A}. Also
		\begin{equation*}\label{6.2}
			H_0(\xi)=\tilde{H}(\xi),\,\,\,\textrm{whenever}\,\,\,|\varepsilon\xi+a|>\kappa_0.
		\end{equation*}
		Therefore, if $v\in\mathbb{K}_R$, then	
		\begin{align*}
			\int_{B_R}\left(H_0(\nabla u)+\delta (u-\varphi)^\gamma\right)\,dx&=	\int_{B_R}\left(\tilde{H}(\nabla u)+\delta (u-\varphi)^\gamma\right)\,dx\\
			&\le\int_{B_R}\left(\tilde{H}(\nabla v)+\delta(v-\varphi)^\gamma\right)\,dx\\
			&\le\int_{B_R}\left(H_0(\nabla v)+\delta(v-\varphi)^\gamma\right)\,dx,
		\end{align*}
		that is, $u$ is a minimizer of 	
		$$
		\int_{B_R}\left(H_0(\nabla v)+\delta(v-\varphi)^\gamma\right)\,dx
		$$
		over the set $\mathbb{K}_R$. Theorem \ref{t3.1} them implies the desired result.
	\end{proof}
	The next lemma is the main step towards our goal.
	\begin{lemma}\label{l5.2}
		If $u$ is a minimizer of \eqref{1.1} for $\Omega=B_1$, $0\in \partial\{u>\varphi\}$ and $0<\kappa\le|\nabla u|\le\Gamma$ for some constants $\kappa$ and $\Gamma$, then there exists $C>0$, depending only on $n$, $p$, $\kappa$, $\Gamma$, $\|u\|_{L^{\infty}(B_1)}$ and $\|\varphi\|_{C^{1,\beta}(B_1)}$, such that for every $r\in(0,1)$ either
		$$
		\|u-\varphi\|_{L^\infty(B_r)}\le Cr^{\theta},
		$$
		or there exists $j\in\mathbb{N}$ with $2^jr<1$ such that
		$$
		\|u-\varphi\|_{L^\infty(B_r)}\le 2^{-j\theta}\|u-\varphi \|_{L^\infty(B_{2^jr})}, 
		$$
		where
		$$
		\theta:=\min\left\{1+\beta,\, \frac{2}{2-\gamma}\right\}.
		$$
	\end{lemma}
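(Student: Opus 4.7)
The plan is to argue by contradiction through a compactness-and-blow-up analysis, with the adjusted scaling of Section~\ref{s8} playing the central role. Suppose the dichotomy fails; then one obtains sequences $r_k \in (0,1)$, minimizers $u_k$ of \eqref{1.1} on $B_1$, and obstacles $\varphi_k \in C^{1,\beta}(B_1)$ all satisfying the hypotheses with the fixed constants $\kappa$ and $\Gamma$, such that $M_k := \|u_k - \varphi_k\|_{L^\infty(B_{r_k})}$ obeys both
\begin{equation*}
M_k > k\, r_k^{\theta} \qquad\text{and}\qquad \|u_k - \varphi_k\|_{L^\infty(B_{2^j r_k})} < 2^{j\theta} M_k
\end{equation*}
for every integer $j \ge 0$ with $2^j r_k < 1$. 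In particular $r_k \to 0$. Let $a_k := \nabla u_k(0)$; since $u_k - \varphi_k$ attains its minimum $0$ at the free boundary point, one has $a_k = \nabla \varphi_k(0)$, so $|a_k|\in[\kappa,\Gamma]$.

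Next, I would introduce the adjusted blow-up. With $P_k(x) := u_k(0) + a_k \cdot x$, set
\begin{equation*}
v_k(y) := \frac{(u_k - P_k)(r_k y)}{M_k}, \qquad \psi_k(y) := \frac{(\varphi_k - P_k)(r_k y)}{M_k}.
\end{equation*}
A change of variables, combined with the observation that $P_k$ contributes only affine boundary terms to the energy, shows that $v_k$ minimizes $\int (\tilde H_k(\nabla v) + \delta_k (v-\psi_k)^{\gamma})\,dy$ over competitors above $\psi_k$, where $\tilde H_k$ is exactly the function of Section~\ref{s8} with scaling parameters
\begin{equation*}
\varepsilon_k = M_k/r_k, \qquad a_k = \nabla u_k(0), \qquad \delta_k = M_k^{\gamma - 2}\, r_k^{2}.
\end{equation*}
Using $\theta \le 2/(2-\gamma)$ in $M_k > k r_k^{\theta}$ yields $\delta_k \le k^{\gamma-2} \to 0$, and $\theta \le 1+\beta$ combined with $\psi_k(0) = \nabla\psi_k(0) = 0$ and $\varphi_k \in C^{1,\beta}$ gives $\psi_k \to 0$ in $C^{1,\beta}_{\loc}(\R^n)$. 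Since $\varepsilon_k \nabla v_k + a_k = \nabla u_k(r_k\cdot)$ satisfies $|\varepsilon_k \nabla v_k + a_k|\ge\kappa$, and the normalization $\|v_k-\psi_k\|_{L^\infty(B_1)}=1$ together with the polynomial growth bound $\|v_k-\psi_k\|_{L^\infty(B_{2^j})}<2^{j\theta}$ produces a uniform $L^\infty$ bound on each fixed ball, Lemma~\ref{l5.1} delivers uniform $C^{1,\alpha}$ estimates on every compact set. A diagonal extraction then yields $v_k \to v_\infty$ in $C^{1,\alpha'}_{\loc}(\R^n)$ for some $\alpha' < \alpha$, with $v_\infty(0)=0$, $\nabla v_\infty(0)=0$, $v_\infty \ge 0$, $\|v_\infty\|_{L^\infty(B_1)}=1$, and $\|v_\infty\|_{L^\infty(B_{2^j})} \le 2^{j\theta}$ for all $j\ge 0$.

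The main obstacle is to identify the limiting equation for $v_\infty$ and extract the contradiction. After a further subsequence $a_k \to a_\infty$ with $|a_\infty|\ge\kappa$. In the principal case $\varepsilon_k \to 0$, a Taylor expansion of $\tilde H_k$ around $a_\infty$ converges to the positive definite quadratic form $\tfrac12 \langle A\xi,\xi\rangle$, where $A := D^2(|\cdot|^p/p)(a_\infty)$; coupled with $\delta_k \to 0$ and $\psi_k \to 0$, this identifies $v_\infty$ as a variational solution of the zero-obstacle problem for the uniformly elliptic linear operator $L v := \Div(A\nabla v)$: $v_\infty \ge 0$ and $L v_\infty \le 0$ in $\R^n$, with equality on $\{v_\infty > 0\}$. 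The strong minimum principle applied to this $L$-supersolution, which attains its global minimum $0$ at the interior point $0$, forces $v_\infty \equiv 0$, contradicting $\|v_\infty\|_{L^\infty(B_1)}=1$. The residual regimes, in which $\varepsilon_k$ stays bounded away from $0$ or tends to $\infty$ (the latter after renormalizing the energy by $\varepsilon_k^{-(p-2)}$), yield analogously a nontrivial supersolution $v_\infty$ of a uniformly elliptic quasilinear operator with an interior zero, and the quasilinear strong minimum principle closes the contradiction in the same way.
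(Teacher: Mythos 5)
Your proposal follows essentially the same compactness/blow-up strategy as the paper's proof: the same adjusted rescaling $v_k = (u_k - P_k)(r_k\,\cdot\,)/M_k$ (the paper writes $c_k$ for $M_k$, and defines $\varepsilon_k = c_k/r_k$, $\delta_k = c_k^\gamma/\varepsilon_k^2 = r_k^2/c_k^{2-\gamma}$ exactly as you do), the same appeal to Lemma~\ref{l5.1} for uniform $C^{1,\alpha}$ estimates, and the same identification of the blow-up limit as a nonnegative supersolution of a constant-coefficient uniformly elliptic equation $\Div(A\nabla v_\infty)\le 0$. There is, however, one gap worth closing. You treat $\varepsilon_k\to 0$ as only the \emph{principal} case and wave at two residual regimes ($\varepsilon_k$ bounded away from zero, or $\varepsilon_k\to\infty$). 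In fact $\varepsilon_k\to 0$ \emph{always} holds: since $0\in\partial\{u_k>\varphi_k\}$, the function $u_k-\varphi_k$ and its gradient vanish at $0$, and the uniform interior $C^{1,\alpha}$ estimate of Theorem~\ref{t3.1} gives $M_k = \|u_k-\varphi_k\|_{L^\infty(B_{r_k})}\le C r_k^{1+\alpha}$, hence $\varepsilon_k = M_k/r_k\le C r_k^{\alpha}\to 0$ because $r_k\to 0$. Without this observation, the direct application of Lemma~\ref{l5.1} is not justified (that lemma requires $\varepsilon\in(0,1]$), and your sketch of the residual regimes is too vague to stand on its own; supplying the line above removes those cases entirely. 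Finally, your ending is a clean variant of the paper's: you apply the strong minimum principle directly to the $A$-supersolution $v_\infty\ge 0$ with interior zero at the origin, whereas the paper first upgrades $v_\infty$ to an $A$-harmonic function via a comparison argument and then invokes a Liouville-type theorem (using the subquadratic growth $\|v_\infty\|_{L^\infty(B_{2^j})}\le 2^{j\theta}$ with $\theta<2$ and $v_\infty(0)=|\nabla v_\infty(0)|=0$). Both routes yield $v_\infty\equiv 0$ and hence the contradiction with $\|v_\infty\|_{L^\infty(B_1)}=1$; yours is arguably the more economical of the two.
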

	\begin{proof}
		We argue by contradiction and assume that there exists sequence of minimizers $u_k$, obstacles $\varphi_k$ and radii $r_k$ with
		$$
		\max\left\{\|u_k\|_{L^{\infty}(B_1)},\|\varphi_k\|_{L^\infty(B_1)}\right\}\leq M<+\infty,
		$$
		such that
		\begin{equation}\label{6.3}
			c_k:=\|u_k-\varphi_k \|_{L^\infty(B_{r_k})}>k r_k^{\theta}.
		\end{equation}
		and $\forall j\in\mathbb{N}$ with $2^jr_k<1$,
		\begin{equation}\label{6.4}
			\|u_k-\varphi_k \|_{L^\infty(B_{r_k})}>2^{-j\theta}\|u_k-\varphi_k\|_{L^\infty(B_{2^jr_k})}.
		\end{equation}
		Observe that the boundedness of $u_k$, $\varphi_k$ combined with \eqref{6.3} implies 
		\begin{equation}\label{6.5}
			r_k\to0.
		\end{equation}
		On the other hand, by Theorem \ref{t3.1},
		\begin{equation}\label{6.6}
			c_k\le C r_k^{1+\alpha},
		\end{equation}
		with 
		$$
		\alpha\le\min\left\{\beta,\frac{\gamma}{2-\gamma}\right\}.
		$$
		Thus, $1+\alpha\le\theta$.
		As $0\in\partial\{u_k>\varphi_k\}$, then $\nabla u_k(0)=\nabla\varphi_k(0):=a_k$. The strategy now is to apply Lemma \ref{l5.1} to suitably scaled functions. Set
		$$
		\tilde{u}_k(x):=\frac{1}{c_k}\left[u_k(r_kx)-u_k(0)-a_k\cdot(r_kx)\right]
		$$
		and
		$$
		\tilde{\varphi}_k(x):=\frac{1}{c_k}\left[\varphi_k(r_kx)-\varphi_k(0)-a_k\cdot(r_kx)\right].
		$$
		Note that 
		\begin{equation}\label{obstacle}
			\tilde{u}_k\ge\tilde{\varphi}_k\,\,\textrm{ in }\,\,B_r,\,\, \forall r\le\frac 1 r_k.
		\end{equation}
		Also $\tilde{u}_k(0)=\tilde{\varphi}_k(0)=0$, $\nabla\tilde{u}_k(0)=\nabla \tilde{\varphi}_k(0)=0$ with 
		\begin{equation}\label{unitnorm}
			\|\tilde{u}_k-\tilde{\varphi}_k\|_{L^\infty(B_{1})}=1,
		\end{equation}
		and, recalling \eqref{6.4},
		$$
		\|\tilde{u}_k-\tilde{\varphi}_k \|_{L^\infty(B_{2^j})}<2^{j\theta}.
		$$
		Furthermore,
		\begin{equation}\label{6.7}
			\|\tilde{\varphi}_k\|_{C^{1,\beta}(B_{1/r_k})}\le\frac{Cr_k^{1+\beta}}{c_k}  \|{\varphi}_k\|_{C^{1,\beta}(B_1)}\le\frac{C}{k}  \|{\varphi}_k\|_{C^{1,\beta}(B_1)}\to0.
		\end{equation}
		In the second estimate we used the fact that $1+\alpha\le\theta$ (in fact, the inequality is strict, otherwise \eqref{6.3} contradicts to \eqref{6.6}) together with \eqref{6.3}, \eqref{6.5} and \eqref{6.6}. Note that
		$$
		u_k(x)=c_k\tilde{u}_k\left(\frac{x}{r_k}\right)+u_k(0)+a_k\cdot x,\,\,\,x\in B_{1/r_k},
		$$
		and since $u_k$ is a solution of \eqref{1.1} with obstacle $\varphi_k$, then for any $r\leq \frac 1 {r_k}$, the function $\tilde{u}_k$ is a minimizer of	
		$$
		\int_{B_r}\left(\frac{|\frac{c_k}{r_k} \nabla v+a_k|^p}{p}+c_k^{\gamma}(v-\tilde{\varphi}_k)^\gamma\right)\,dx
		$$
		over the set of functions $v\in W^{1,p}(B_r)$ that stay above $\tilde{\varphi}_k$, i.e., $v\ge\tilde{\varphi}_k$. On the other hand, as $k\to\infty$, using \eqref{6.5}, \eqref{6.6} and \eqref{6.3}, one has 
		$$
		\varepsilon_k:=\frac{c_k}{r_k}\leq C r_k^{\alpha}\to0\,\,\,\text{ and }\,\,\,\delta_k:= \frac{c_k^\gamma}{\varepsilon_k^2}= \frac{r_k^2}{c_k^{2-\gamma}}\leq \frac{1}{k^{2-\gamma}}\to 0.
		$$
		Since
		$$
		\int_{B_r}\left(|a_k|^p+p\varepsilon_k |a_k|^{p-2}a_k\cdot\nabla v\right)\,dx
		$$
		depends only on boundary values of $v$, then $\tilde{u}_k$ is also a minimizer of 
		\begin{equation}\label{6.8}
			\int_{B_r}\left(H_k(\nabla v)+\delta_k (v-\tilde{\varphi}_k)^\gamma\right)\,dx,
		\end{equation}
		over the set of functions that stay above $\varphi_k$. Here
		$$
		H_k(\xi):=\frac{1}{p\varepsilon_k^2}\left(|\varepsilon_k\xi+a_k|^p-|a_k|^p-p\varepsilon_k |a_k|^{p-2}a_k\cdot \xi\right),\,\,\,\xi\in\R^n.
		$$
		As $\kappa\le|a_k|\le\Gamma$, then up to a subsequence $a_k\to a_\infty\in\R^n$ and $\kappa\le |a_\infty|\le\Gamma$. Consequently, as $k\rightarrow\infty$,
		\begin{equation}\label{6.9}
			H_k(\xi)\rightarrow\langle A,\xi\otimes\xi\rangle=\sum_{i,j=1}^n A_{ij}\xi_i\xi_j,
		\end{equation}
		locally uniformly in $\R^n$, where $A$ is the $n\times n$ strictly positive definite matrix defined below
		$$
		A:= |a_\infty|^{p-2}\mathbb{I}_n+(p-2)|a_\infty|^{p-4} a_\infty\otimes a_\infty \ge\kappa^{p-2}\mathbb{I}_n,
		$$
		with $\mathbb{I}_n$ being the $n\times n$ identity matrix. By Theorem \ref{t3.1}, $u_k$ is uniformly $C_{\loc}^{1,\alpha}$. Hence, for any $R>0$, one has
		$$
		\varepsilon_k|\nabla\tilde{u}_k(x)|= |\nabla u_k(r_k x)-\nabla u_k(0)|\leq C( r_kR)^\alpha,\,\,\,\forall x\in B_R.
		$$
		Thus, for fixed $R>0$ and large $k$, we can assume  that 
		$$
		\frac{\kappa}{2}\le|\varepsilon_k\nabla \tilde{u}_k+a_k|\leq2\Gamma.
		$$
		Lemma \ref{l5.1} then implies that $\tilde{u}_k$ is uniformly bounded in $C^{1,\alpha}(B_R)$. By Arzella-Ascoli theorem, there exists a function $u_*\in C^{1,\alpha}(\R^n)$, such that up to a subsequence, 
		\begin{equation}\label{6.10}
			\tilde{u}_k\to u_*\,\,\,\textrm{ and }\,\,\,\nabla\tilde{u_k}\to\nabla u_*
		\end{equation}
		locally uniformly in $\R^n$. Additionally, using \eqref{unitnorm}, \eqref{6.7} and \eqref{6.10}, one has
		\begin{equation}\label{norm}
			\|u_*\|_{L^\infty(B_1)}=1.
		\end{equation}
		Also, \eqref{obstacle}, \eqref{6.7} and \eqref{6.10} provide $u_*\ge0$.
		
		Next, we show that $u_*$ is $A$-harmonic, which implies (Liouville's theorem) that it has to be identically zero (as it vanishes at the origin). More precisely, if $U:=\{u_*>0\}\neq\emptyset$, then for any $B_r\subset U$ and $\psi\in C^{\infty}_0(B_r)$, we choose $\varepsilon_0>0$ small enough so that for  $\varepsilon<\varepsilon_0$ one has
		\[
		\inf_{B_r}\{u_*+\varepsilon \psi\}>0\text{ on } B_r.
		\]
		Recalling \eqref{6.7} and \eqref{6.10}, by uniform  convergence $\tilde{u}_k\to u_{*}$ and $\tilde{\varphi}_k\to 0$, for $k>k_\varepsilon$, one has
		\[
		\inf _{B_r} \{u_* +\varepsilon \psi \} - u_*> \varphi_k - \tilde{u}_k,
		\]
		and hence, 
		\[
		{\tilde{u}_k+\varepsilon \psi } \geq \varphi_k \text{ on } B_r.
		\]
		Since $\tilde{u}_k$ is a minimizer of \eqref{6.8} over the set of functions that stay above $\varphi_k$, then
		\begin{equation*}
			\begin{aligned}
				&\int_{B_r}\left(H_k(\nabla \tilde{u}_k)+\delta_k (\tilde{u}_k-\tilde{\varphi}_k)^\gamma\right)\,dx\\
				&\le\int_{B_r}\left(H_k(\nabla( \tilde{u}_k+\varepsilon \psi))+\delta_k (\tilde{u}_k+\varepsilon \psi-\tilde{\varphi}_k)^\gamma\right)\,dx.
			\end{aligned}
		\end{equation*}
		Using \eqref{6.9} and passing to the limit in the last inequality, we obtain
		\[
		I_A(u_*)\le I_A(u_*+\varepsilon\psi),
		\]
		where
		$$
		I_A(v):=\int_{B_r}\langle A,\nabla v\otimes \nabla v\rangle\,dx.
		$$
		Thus, $u_*$ is a minimizer of $I_A$ over the set of non-negative functions. Therefore, $u_*$ is $A$-harmonic in $\{u_*>0\}$, i.e.,
		\[
		\Div(A \nabla u_*)=0 \text{ in } \{u_*>0\}.
		\]
		Using similar reasoning with $\psi\geq 0$, we conclude  that $u_*$  is A-superharmonic on the whole space $\mathbb{R}^n$, i.e.,
		\[
		\Div(A \nabla u_*) \leq 0 \text{ in } \mathbb{R}^n.
		\]
		Now, if $B_R$ is any ball and $h^*_R$ is the $A$-harmonic function in $B_R$ that agrees with $u_*$ on $\partial B_R$, then by the maximum principle (see, for example, \cite[page 111]{HKM06}), $u_*\geq h^*_R\geq 0$. If $U_R:=\{u_*>h^*_R\}\neq\emptyset$, then since $u_*>0$ in $U_R$, by the previous argument, $u_*$ is $A$-harmonic in $U_R$, and $u_*=h^*_R$ on $\partial U_R$. Once again, using the maximum principle, we conclude that $u_*=h^*_R$ in $U_R$, which is a contradiction, and hence, $u_*=h_R$ in $B_R$. Since $B_R$ was arbitrary, then $u_*$ is everywhere $A$-harmonic. As it is also bounded, Liouville theorem (see, for example, \cite[page 112]{HKM06}) then insures that $u_*\equiv const$. Moreover, as by construction $\tilde{u}_k(0)=0$, then from \eqref{6.10} one has $u_*(0)=0$. Thus, $u_*\equiv0$, which contradicts to \eqref{norm}.
	\end{proof}
	\begin{corollary}\label{c5.1}
		If $u$ is a minimizer of \eqref{1.1} for $\Omega=B_1$, $0\in \partial\{u>\varphi\}$ and $0<\kappa\le|\nabla u|\le\Gamma$ for some constants $\kappa$ and $\Gamma$, then there exists $C>0$, depending only on $n$, $p$, $\kappa$, $\Gamma$, $\|u\|_{L^{\infty}(B_1)}$ and $\|\varphi\|_{C^{1,\beta}(B_1)}$, such that for every $r\in(0,\frac 12)$ one has
		$$
		\|u-\varphi\|_{L^\infty(B_r)}\le Cr^{1+\tau},
		$$
		where $\tau>0$ is defined by \eqref{tau}. 	
	\end{corollary}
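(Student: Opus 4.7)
The plan is to iterate Lemma~\ref{l5.2} upward in scale until either its first alternative yields polynomial decay directly, or we reach a macroscopic radius at which the trivial $L^\infty$ bound suffices. A preliminary computation shows that for $p\ge 2$ one has $\frac{p}{p-\gamma}\le \frac{2}{2-\gamma}$ (equivalent to $p\gamma\ge 2\gamma$), hence $1+\tau\le\theta$. Consequently, once a bound of the form $\|u-\varphi\|_{L^\infty(B_r)}\le Cr^{\theta}$ is established for $r\in(0,1)$, the desired $Cr^{1+\tau}$ estimate follows immediately.

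Fix $r\in(0,\tfrac12)$, set $r_0:=r$, and apply Lemma~\ref{l5.2} at $r_0$. If the first alternative holds we are done; otherwise the second alternative produces $j_0\in\mathbb{N}$, $j_0\ge 1$, with $2^{j_0}r_0<1$ and
$$
\|u-\varphi\|_{L^\infty(B_{r_0})}\le 2^{-j_0\theta}\|u-\varphi\|_{L^\infty(B_{r_1})},\qquad r_1:=2^{j_0}r_0.
$$
I iterate the dichotomy at $r_1,r_2,\dots$, generating a strictly increasing sequence $r_k=2^{j_0+\cdots+j_{k-1}}r_0<1$ whose members at least double at each step. The sequence therefore terminates after at most $N\le\lceil\log_2(1/r)\rceil$ iterations, at which point either (i) the first alternative of Lemma~\ref{l5.2} finally applies at $r_N$, giving $\|u-\varphi\|_{L^\infty(B_{r_N})}\le Cr_N^{\theta}$, or (ii) $r_N$ has crossed $\tfrac12$, in which case the crude estimate $\|u-\varphi\|_{L^\infty(B_{r_N})}\le 2(\|u\|_{L^\infty(B_1)}+\|\varphi\|_{L^\infty(B_1)})$ can be absorbed into $C'r_N^{\theta}$ because $r_N^{\theta}\ge 2^{-\theta}$.

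Telescoping the second-alternative inequalities along the chain and substituting $r_N=2^{j_0+\cdots+j_{N-1}}r_0$ yields
$$
\|u-\varphi\|_{L^\infty(B_{r})}\le 2^{-\theta(j_0+\cdots+j_{N-1})}Cr_N^{\theta}=C\left(\frac{r_0}{r_N}\right)^{\!\theta}r_N^{\theta}=Cr^{\theta}\le Cr^{1+\tau},
$$
which is the asserted estimate. The argument is essentially combinatorial: no analytic machinery beyond Lemma~\ref{l5.2} is invoked, and the only point requiring care is to verify that the upward iteration always terminates productively. This is guaranteed by the geometric growth $r_{k+1}\ge 2r_k$, which forces either the first alternative to activate or $r_N$ to leave the regime $r<\tfrac12$ within logarithmically many steps, so no genuine obstacle arises once the dichotomy of Lemma~\ref{l5.2} is in hand.
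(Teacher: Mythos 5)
Your proof is correct and is exactly the iterated dichotomy argument that the paper leaves implicit (Corollary~\ref{c5.1} is stated without proof as an immediate consequence of Lemma~\ref{l5.2}). The only redundant step is your case (ii): once $r_N\ge\frac12$ no $j\in\mathbb{N}$, $j\ge1$, satisfies $2^jr_N<1$, so the second alternative of Lemma~\ref{l5.2} is vacuous and the first alternative must hold, making the crude $L^\infty$ fallback unnecessary — though your version is perfectly valid.
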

	
	\section{Large gradient estimates}\label{s9}
	We are finally ready to prove sharp regularity of minimizers with large gradient at the free boundary.
	\begin{theorem}\label{t5.1}
		If $u$ is a minimizer of \eqref{1.1}, $x_0\in \partial\{u>\varphi\}\cap\Omega$ and $\kappa>0$ is as in Theorem \ref{t4.1}, then there exist positive constants $C$ and $\rho_0$, depending only on $p$, $\gamma$, $\dist(x_0,\partial\Omega)$, $\|u\|_{L^{\infty}(\Omega)}$ and $\|\varphi\|_{C^{1,\beta}(\Omega)}$, such that whenever
		$$
		|\nabla u(x_0)|> \kappa\rho^{\tau}, 
		$$
		for some $\rho<\rho_0$, then
		$$
		\sup_{B_{\rho}}|u(x)-u(x_0)-\nabla u(x_0)\cdot(x-x_0)|\le C \rho^{1+\tau},
		$$
		where $\tau=\min\left\{\beta,\frac{\gamma}{p-\gamma}\right\}$. 
	\end{theorem}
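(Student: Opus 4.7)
The plan is to reduce Theorem \ref{t5.1} to Lemma \ref{l5.1} via the adjusted scaling of Section \ref{s8}. The guiding observation is that the hypothesis $|\nabla u(x_0)|>\kappa\rho^\tau$, written in the natural rescaling at scale $\rho$, is exactly the uniform lower bound on the rescaled gradient that activates the Section \ref{s8} machinery.

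After translating and subtracting a constant so that $x_0=0$, $u(0)=\varphi(0)=0$, and setting $a:=\nabla u(0)=\nabla\varphi(0)$, I would introduce the adjusted rescaling
\[
\tilde u(y):=\rho^{-(1+\tau)}\bigl[u(\rho y)-a\cdot\rho y\bigr],\qquad \tilde\varphi(y):=\rho^{-(1+\tau)}\bigl[\varphi(\rho y)-a\cdot\rho y\bigr],
\]
so that $\tilde u(0)=0$, $\nabla\tilde u(0)=0$, $\tilde u\geq\tilde\varphi$, and the origin is still a free boundary point. A change of variables, dropping affine terms in $\nabla v$ that do not affect minimization, shows that $\tilde u$ minimizes
\[
\int\bigl(\tilde H(\nabla v)+\delta\,(v-\tilde\varphi)^\gamma\bigr)\,dy
\]
on $B_{1/\rho}$, with $\tilde H$ the adjusted Lagrangian of Section \ref{s8} (with $\varepsilon=\rho^\tau$ and vector $a$) and $\delta=\rho^{\gamma-\tau(2-\gamma)}\in(0,1]$, the exponent being non-negative because $\tau\leq\gamma/(p-\gamma)$. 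The hypothesis now reads $|a|/\varepsilon>\kappa$, placing $\tilde u$ in the regime of Lemma \ref{l5.1}.

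Choosing the constant $\kappa$ in Theorem \ref{t5.1} sufficiently large relative to the universal $C^{1,\alpha}$-modulus of $u$ furnished by Theorem \ref{t3.1}, continuity of $\nabla u$ propagates the lower bound to $|\varepsilon\nabla\tilde u+a|=|\nabla u(\rho\,\cdot\,)|\geq(\kappa/2)\varepsilon$ on a ball of universal radius around the origin; Lemma \ref{l5.1} then yields a uniform $C^{1,\alpha}(B_2)$ bound for $\tilde u$. Combined with $\tilde u(0)=0$ and $\nabla\tilde u(0)=0$, this gives $|\tilde u(y)|\leq C|y|^{1+\alpha}$ on $B_1$, which rescales back to
\[
|u(x)-u(0)-\nabla u(0)\cdot x|=\rho^{1+\tau}|\tilde u(x/\rho)|\leq C\rho^{\tau-\alpha}|x|^{1+\alpha}\leq C\rho^{1+\tau}
\]
for $|x|\leq\rho$, which is the desired estimate.

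The principal obstacle is verifying that the hypothesis of Lemma \ref{l5.1} holds on a ball of size independent of $\rho$: the pointwise bound $|\nabla u(x_0)|>\kappa\rho^\tau$ must propagate to a full neighbourhood, which is precisely where the choice of $\kappa$ in Theorem \ref{t4.1} (and so here) enters decisively. A subtler difficulty appears in the marginal sub-case $|a|\sim\rho^\tau$ for $p>2$, where the ellipticity constants of $\tilde H$ formally degenerate in the limit; there an additional normalization by $|a|^{p-2}$ converts the functional into a uniformly elliptic (quadratic) one, the singular coefficient remaining under control thanks to $\tau\leq\gamma/(p-\gamma)$, and a contradiction argument parallel to Lemma \ref{l5.2}---producing a bounded solution of a linear elliptic equation that vanishes to second order at the origin, hence identically zero by Liouville's theorem---closes the argument.
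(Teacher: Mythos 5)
Your plan scales at the given radius $\rho$ and tries to feed the rescaled function
$\tilde u(y)=\rho^{-(1+\tau)}[u(\rho y)-a\cdot\rho y]$ directly into Lemma~\ref{l5.1}.
That lemma, however, delivers a $C^{1,\alpha}$ bound whose constant depends on
$\|\tilde u\|_{L^\infty}$, and your argument never produces an $L^\infty$ bound on $\tilde u$
that is independent of $\rho$. Indeed,
$\|\tilde u\|_{L^\infty(B_1)}\le C\rho^{\alpha-\tau}$ is the best one gets from the interior estimate
(Theorem~\ref{t3.1}), where $\alpha=\min\{\sigma^-,\gamma/(p-\gamma),\beta/(p-1)\}$ can be strictly
smaller than $\tau=\min\{\beta,\gamma/(p-\gamma)\}$ (for instance, $\beta/(p-1)<\beta$ for $p>2$, and
$\sigma$ is typically small). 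In that case $\rho^{\alpha-\tau}\to\infty$ as $\rho\to0$, so the needed
$L^\infty$ control is precisely what Theorem~\ref{t5.1} is asserting; the argument is circular.
This is not a technicality: it is the whole reason the sharp free-boundary rate requires a separate
mechanism beyond the interior estimate.

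The paper avoids the circularity by \emph{not} scaling at $\rho$. It fixes the intrinsic scale
$\rho_*=(|\nabla u(x_0)|/\kappa)^{1/\tau}$ so that, after the plain rescaling
$\tilde u(x)=[u(x_0+\rho_*x)-u(x_0)]/\rho_*^{1+\tau}$, one has
$|\nabla\tilde\varphi(0)|=|\nabla\tilde u(0)|=\kappa$. At this exact scale the small-gradient
estimate of Theorem~\ref{t4.1} marginally applies (since
$|\nabla\varphi(x_0)|=\kappa\rho_*^\tau$), furnishing the missing uniform $L^\infty$ bound on
$\tilde u$; Theorem~\ref{t3.1} then upgrades this to a uniform $C^{1,\alpha}$ bound, which propagates
the gradient lower bound $\kappa/2<|\nabla\tilde u|<2/\kappa$ on a universal ball $B_{r_*}$. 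Inside
that ball the paper applies Corollary~\ref{c5.1} (the output of the compactness/blow-up argument of
Lemma~\ref{l5.2}), which delivers the sharp rate $1+\tau$ directly --- not Lemma~\ref{l5.1}, which
only yields the interior exponent $\alpha$. Two further points: the constant $\kappa$ is fixed by
Theorem~\ref{t4.1} and cannot be ``chosen sufficiently large'' at this stage, since Theorems
\ref{t4.1} and \ref{t5.1} must be dichotomous with the same $\kappa$; and the gradient-propagation
step must be run on $\tilde u$ after its $L^\infty$ bound is known, not on $u$ itself, where the
exponent mismatch $\alpha<\tau$ again kills the estimate.

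In short, the missing ingredient is the two-step structure rescale at $\rho_*$ then apply
Theorem~\ref{t4.1} then apply Corollary~\ref{c5.1}, which your proposal replaces with a direct
invocation of Lemma~\ref{l5.1} at scale $\rho$ that has no way to bound $\tilde u$ in $L^\infty$.
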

	\begin{proof}
		The idea is to apply Corollary \ref{c5.1} to a suitably rescaled function. For that purpose, set 
		$$
		\rho_*:=\left(\frac{|\nabla u(x_0)|}{\kappa}\right)^{\frac{1}{\tau}}
		$$
		and define
		$$
		\tilde{u}(x):=\frac{u(x_0+\rho_* x)-u(x_0)}{\rho_*^{1+\tau}},\quad \tilde{\varphi}(x):=\frac{\varphi(x_0+\rho_*x)-\varphi(x_0)}{\rho_*^{1+\tau}},
		$$
		Observe that $\tilde{u}$ is a minimizer of \eqref{1.1} for $\Omega=B_1$ with obstacle $\tilde{\varphi}$, and $0\in\partial\{\tilde{u}>\tilde{\varphi}\}$. For $\rho_*\le\rho_0$, Theorem \ref{t4.1} provides
		$$
		\sup_{x\in B_{\rho_*}(x_0)}|u(x)-u(x_0)| \le C \rho_*^{1+\tau}.
		$$
		Hence, $\tilde{u}$ is uniformly bounded in $B_1$, i.e., there exists a constant $C_1>0$, such that
		$$
		\|\tilde{u}\|_{L^{\infty}(B_1)}<C_1.
		$$
		As zero is a contact point, then
		$$
		|\nabla\tilde{\varphi}(0)|=|\nabla\tilde{u}(0)| = \frac{1}{\rho_*^\tau} |\nabla u (x_0)|=\kappa,
		$$
		and so there exists a universal constant $C_2>0$, such that
		$$
		\|\tilde{\varphi}\|_{C^{1, \beta}(B_1)}\le C\|\varphi\|_{C^{1, \beta}(\Omega)}<C_2.
		$$
		Note also that there exists $r_*>0$, depending only on $\kappa$, $C_1$ and $C_2$, such that 
		$$
		\frac{\kappa}{2}<|\nabla\tilde{u}(x)|<\frac{2}{\kappa}\,\,\,\textrm{in}\,\,\,B_{r_*}.
		$$
		Applying Corollary \ref{c5.1} in $B_{r_*}$ for the minimizer $\tilde{u}$ with obstacle $\tilde{\varphi}$, for $r\in(0,\frac{r_*}{2})$, we have
		$$
		\sup_{|x|<r}|\tilde{u}(x)-u(0)-\nabla \tilde{u}(0)\cdot x|\le C r^{1+\tau}.
		$$
		The latter, in terms of the function $u$, is
		$$
		\sup_{|x-x_0|<r}|u(x)-u(x_0)-\nabla u(x_0)\cdot(x-x_0)|\le C r^{1+\tau},
		$$
		where $0<r<\frac{\rho_*r_*}{2}$.
		
		If $\frac{\rho_*r_*}{2}\le r\le\rho_*$, then, as $B_r(x_0)\subseteq B_{\rho_*}(x_0)$, applying Theorem \ref{t4.1} with radius $\rho_*$, we estimate
		$$
		\sup_{|x-x_0|<r}|u(x)-u(x_0)-\nabla u(x_0)\cdot(x-x_0)|\le C\rho_*^{1+\tau}\le C {\frac{2}{r_*}}^{1+\tau}r^{1+\tau}.
		$$
		Finally, if $\rho_*>\rho_0$, then 
		$$
		|\nabla u(x_0)|>\kappa\rho_0^\tau,
		$$
		and Corollary \ref{c5.1} gives the desired estimate.	
	\end{proof}
	As a consequence of Theorem \ref{t4.1} and Theorem \ref{t5.1}, we obtain the following result.
	\begin{theorem}\label{mainresult}
		Let $\varphi\in C^{1,\beta}(\Omega)$, $g\in W^{1,p}(\Omega)$, $p\in[2,\infty)$ and $\gamma\in(0,1)$. If $u$ is a minimizer of \eqref{1.1}, then for any $\Omega'\subset\subset\Omega$, there exist universal constants $C>0$ and $r_0>0$ such that for any $r\in(0,r_0)$ and for any $y\in\partial\{u>\varphi\}\cap\Omega'$, one has
		$$
		\sup_{x\in B_r(y)}|u(x)-u(y)-\nabla u(y)\cdot(x-y)|<Cr^{1+\tau},
		$$
		where $\tau=\min\left\{\beta,\frac{\gamma}{p-\gamma}\right\}$, i.e., $u$ is $C_{\loc}^{1,\tau}$ at $\partial\{u>\varphi\}$, and this regularity is optimal.
	\end{theorem}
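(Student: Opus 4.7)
The plan is to assemble Theorem \ref{t4.1} and Theorem \ref{t5.1} into a single statement that is uniform in $y$ ranging over $\partial\{u>\varphi\}\cap\Omega'$. The bridge between the two cases is the elementary observation that at any free boundary point $y$, the nonnegative difference $u-\varphi$ attains its minimum value zero, hence $\nabla u(y)=\nabla\varphi(y)$. This identity makes the dichotomy ``$|\nabla\varphi(y)|\le\kappa r^{\tau}$ versus $|\nabla u(y)|>\kappa r^{\tau}$'' exhaustive and complementary.

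\emph{Step 1 (Uniform constants on $\Omega'$).} Fix $\Omega'\subset\subset\Omega$ and set $d:=\dist(\Omega',\partial\Omega)>0$. The constants $C$, $\rho_0$ and $\kappa$ appearing in Theorems \ref{t4.1} and \ref{t5.1} depend only on $p$, $\gamma$, $\dist(x_0,\partial\Omega)$, $\|u\|_{L^\infty(\Omega)}$ and $\|\varphi\|_{C^{1,\beta}(\Omega)}$. Replacing $\dist(x_0,\partial\Omega)$ by the uniform lower bound $d$, one obtains values of $C$, $\rho_0$ and $\kappa$ valid simultaneously for every point $y\in\Omega'$. Set $r_0:=\rho_0$ and take for $C$ the maximum of the two constants produced below.

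\emph{Step 2 (Gradient dichotomy).} Let $y\in\partial\{u>\varphi\}\cap\Omega'$ and $r\in(0,r_0)$. If $|\nabla\varphi(y)|\le\kappa r^{\tau}$, Theorem \ref{t4.1} applied at $x_0=y$ yields directly
$$\sup_{x\in B_r(y)}|u(x)-u(y)-\nabla u(y)\cdot(x-y)|\le Cr^{1+\tau}.$$
If instead $|\nabla u(y)|=|\nabla\varphi(y)|>\kappa r^{\tau}$, Theorem \ref{t5.1} applied at $x_0=y$ produces the same bound. Either way, the desired estimate holds with the common constant $C$ and universal radius $r_0$, which by Step 1 are independent of $y\in\Omega'$. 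This proves the $C^{1,\tau}$ estimate at every free boundary point in $\Omega'$.

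\emph{Step 3 (Optimality).} Sharpness of $\tau=\min\{\beta,\gamma/(p-\gamma)\}$ is established by two model examples. For the first threshold, taking $\varphi\equiv 0$, a direct computation shows that $u(x)=c\,|x|^{p/(p-\gamma)}$, with $c>0$ chosen so that $\Delta_p u=\gamma\,u^{\gamma-1}$ in $\{u>0\}$, is a minimizer of \eqref{1.1}; its gradient is precisely H\"older continuous at the origin with exponent $\gamma/(p-\gamma)$, and no better. For the second threshold, choose an obstacle $\varphi$ whose gradient achieves the H\"older regularity $\beta$ at a free boundary point $y$; on the coincidence set $\{u=\varphi\}$ the identity $\nabla u=\nabla\varphi$ forces the regularity of $\nabla u$ at $y$ to be no higher than that of $\nabla\varphi$. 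Combining the two constructions shows neither component of $\tau$ can be improved.

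The substantive work has already been done in Theorems \ref{t4.1} and \ref{t5.1}; the main thing to watch is Step 1, namely checking that the constants in those two theorems can genuinely be made uniform in $y$ over $\Omega'$. This reduces to inspecting their stated dependencies, all of which are controlled by quantities that depend only on $\Omega$, $\|u\|_{L^\infty(\Omega)}$, $\|\varphi\|_{C^{1,\beta}(\Omega)}$ and the separation $d=\dist(\Omega',\partial\Omega)$.
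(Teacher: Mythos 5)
Your proposal is correct and follows the same route as the paper, which states Theorem \ref{mainresult} simply ``As a consequence of Theorem \ref{t4.1} and Theorem \ref{t5.1}'' with no further elaboration. The bridge you identify, namely $\nabla u(y)=\nabla\varphi(y)$ at free boundary points so that the hypotheses $|\nabla\varphi(y)|\le\kappa r^{\tau}$ and $|\nabla u(y)|>\kappa r^{\tau}$ exhaust all cases, is precisely the observation the paper records at the opening of Section \ref{s6}, and the uniformity of constants over $\Omega'$ via the lower bound $d=\dist(\Omega',\partial\Omega)$ is the standard reading of the stated dependencies. Your Step 3 on optimality goes beyond what the paper actually proves (the paper merely asserts optimality and points to \cite{P1}); the radial profile $c|x|^{p/(p-\gamma)}$ does have the right homogeneity for the $\gamma/(p-\gamma)$ threshold, while the $\beta$-threshold argument is only heuristic as written since it presupposes a coincidence set of positive content near $y$, but this is supplementary material rather than a gap in the proof of the stated estimate.
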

	
	\medskip
	
	\textbf{Acknowledgments.} DJA is partially supported by CNPq 311138/2019-5 and grant 2019/0014 Paraiba State Research Foundation (FAPESQ). RT and VV are partially supported by FCT - Funda\c{c}\~ao para a Ci\^encia e a Tecnologia, I.P., through project PTDC/MAT-PUR/28686/2017 and by the Centre for Mathematics of the University of Coimbra - UIDB/00324/2020, funded by the Portuguese Government through FCT/MCTES.
	
	\appendix
	\section{}\label{A}
	In this appendix we prove some properties of the auxiliary functions $H$ and $G$, that are used in Section \ref{s5}. We also construct a function $H_0$ satisfiying \eqref{assumptions}, which is used in the proof of Lemma \ref{l5.1}.
	\begin{lemma}\label{Hproperty}
		If $H$ satisfies \eqref{assumptions} and $G$ is defined by \eqref{3.1}, then
		\begin{equation}\label{leq:HPQ}
			H(x)-H(y)-\nabla H(y)\cdot(x-y)\geq c\,G(|x-y|),
		\end{equation}
		where $c>0$ is a constant depending only on $\kappa_1$, $\kappa_2$, $\Lambda$, $\lambda$, $\Upsilon$ and $p$.
	\end{lemma}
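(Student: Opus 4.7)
The plan is to obtain \eqref{leq:HPQ} via Taylor's formula with integral remainder combined with the lower bound on $D^2H$ from \eqref{assumptions}. Setting $z_t := y + t(x-y)$, integration by parts gives
$$
H(x) - H(y) - \nabla H(y)\cdot(x-y) = \int_0^1 (1-t)\,(x-y)^{T} D^2 H(z_t)(x-y)\, dt,
$$
and the ellipticity assumption together with $\omega(s)/s = \kappa_1 s^{p-2}+\kappa_2$ yields
$$
H(x) - H(y) - \nabla H(y)\cdot(x-y) \;\geq\; \lambda|x-y|^2 \int_0^1 (1-t)\bigl(\kappa_1|z_t|^{p-2}+\kappa_2\bigr)\,dt.
$$
The $\kappa_2$-contribution integrates directly to $\tfrac{\lambda\kappa_2}{2}|x-y|^2$, which already accounts for the quadratic portion of $G(|x-y|)$. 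The task therefore reduces to proving the $p$-growth estimate
$$
|x-y|^2 \int_0^1 (1-t)\,|z_t|^{p-2}\,dt \;\geq\; c_p\, |x-y|^p, \qquad p\geq 2.
$$

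This last bound is the heart of the argument, and I would establish it by a case split on whether $|x|\geq|y|$ or $|y|\geq|x|$ (the two cases being symmetric by exploiting either the right or the left endpoint of the integration interval, where the weight $1-t$, respectively $1-t$ near $t=0$, is bounded below). Assume first $|x|\geq|y|$. The elementary estimate $\max(|x|,|y|)\geq \tfrac12|x-y|$ gives $|x|\geq\tfrac12|x-y|$, and for $t\in[3/4,1]$ the reverse triangle inequality yields
$$
|z_t|\;\geq\; t|x|-(1-t)|y|\;\geq\;(2t-1)|x|\;\geq\;\tfrac12|x|\;\geq\;\tfrac14|x-y|.
$$
Restricting to this subinterval gives
$$
\int_0^1 (1-t)|z_t|^{p-2}\,dt \;\geq\; \Bigl(\tfrac14|x-y|\Bigr)^{p-2}\int_{3/4}^1 (1-t)\,dt \;=\; c_p\,|x-y|^{p-2},
$$
delivering the desired bound. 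When $|y|\geq|x|$, the identical computation applies on $t\in[0,1/4]$ using $|z_t|\geq(1-2t)|y|\geq\tfrac12|y|\geq\tfrac14|x-y|$.

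Combining the two contributions produces $H(x)-H(y)-\nabla H(y)\cdot(x-y)\geq c\,G(|x-y|)$ with a positive constant $c$ depending only on $\kappa_1,\kappa_2,p$ and $\lambda$, which is the claimed estimate. The only mildly delicate technical point is the case split above: a priori neither $|x|$ nor $|y|$ need be comparable to $|x-y|$ individually, but the inequality $\max(|x|,|y|)\geq\tfrac12|x-y|$ combined with the weight $1-t$ in the Taylor remainder selects a subinterval of definite length on which $|z_t|$ is comparable to $|x-y|$. The degenerate configurations $\kappa_1=0$ (linear model) or $\kappa_2=0$ (pure $p$-growth model) are covered by suppressing the corresponding piece in the computation above.
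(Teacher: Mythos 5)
Your proof is correct and follows essentially the same route as the paper: Taylor's formula with integral remainder, the ellipticity lower bound $\eta^T D^2 H(\xi)\eta \geq \lambda(\kappa_1|\xi|^{p-2}+\kappa_2)|\eta|^2$, and then a lower bound on $\int_0^1(1-t)|z_t|^{p-2}\,dt$. The only difference is that the paper compresses the last step into the unproved observation $\int_0^1(1-s)\tilde\omega(|z_s|)\,ds \geq c\,(\tilde\omega(|x|)+\tilde\omega(|y|))$ (with $\tilde\omega(z)=\kappa_1 z^{p-2}+\kappa_2$) combined with $|x-y|^{p-2}\leq C(|x|^{p-2}+|y|^{p-2})$, whereas you supply the proof of the needed estimate directly against $|x-y|^{p-2}$ by the case split $|x|\gtrless|y|$ and the reverse triangle inequality on a subinterval of definite length where the weight is bounded below — a clean way to make the paper's assertion rigorous.
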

	\begin{proof}
		Define
		$$
		\tilde{\omega}(z):=\frac{\omega(z)}{z}=\kappa_1z^{p-2}+\kappa_2
		$$
		and observe that
		$$
		G(|x-y|)\leq C\left(\tilde{\omega}(|x|)+\tilde{\omega}(|y|)\right)|x-y|^2
		$$
		and
		$$
		\int_0^1(1-s)\tilde{\omega}(|(1-s)x + sy|)\,ds\ge c\,\left(\tilde{\omega}(|x|)+\tilde{\omega}(|y|)\right),
		$$
		for any $x$, $y\in\R^n$. Then
		\begin{equation*}
			\begin{aligned}
				&H(x)-H(y)-\nabla H(y)\cdot(x-y)\\
				&=\int_0^1(1-s)\langle D^2 H\left((1-s)x+sy\right),(x-y)\otimes(x-y)\rangle\,ds\\
				&\geq c|x-y|^2\int_0^1(1-s)\,\tilde{\omega}(|(1-s)x+sy|)\,ds\\
				&\geq c\left(\tilde{\omega}(|x|)+\tilde{\omega}(|y|) \right)|x-y|^2
				\geq c\,G(|x-y|).
			\end{aligned}
		\end{equation*}
	\end{proof}
	\begin{lemma}\label{l3.3}	
		If $u,v\in W^{1,p}(B_R)$, then there exists a constant $C>0$, depending only on $p$ and $n$, such that for $p\geq 1$, we have
		\begin{equation*}
			\begin{aligned}
				\int_{B_R}G(|\nabla u- (\nabla u)_{R})|)\,dx&\le 
				C\int_{B_R}G(|\nabla v-(\nabla v)_{R})|)\,dx\\
				&+C
				\int_{B_R}G( |\nabla u-\nabla v|)\,dx.
			\end{aligned}
		\end{equation*}
	\end{lemma}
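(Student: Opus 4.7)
The plan is to decompose $\nabla u - (\nabla u)_R$ into three pieces and exploit both the subadditivity and the convexity of $G$. Concretely, I would write
\begin{equation*}
\nabla u - (\nabla u)_{R} = \bigl(\nabla u - \nabla v\bigr) + \bigl(\nabla v - (\nabla v)_{R}\bigr) + \bigl((\nabla v)_R - (\nabla u)_R\bigr),
\end{equation*}
and observe that the last term is constant in $x$ and, by linearity of the average, equals the average of $\nabla v - \nabla u$ over $B_R$.

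Next I would record two elementary facts about $G(z) = \kappa_1 z^p/p + \kappa_2 z^2/2$ for $p \geq 1$. First, subadditivity: since $t \mapsto t^p$ and $t \mapsto t^2$ are convex and vanish at $0$, one has $(a+b+c)^p \leq 3^{p-1}(a^p + b^p + c^p)$ and similarly with exponent $2$, so
\begin{equation*}
G(a+b+c) \leq C(p)\bigl(G(a) + G(b) + G(c)\bigr)
\end{equation*}
for $a,b,c \geq 0$. Second, convexity of $G$ on $[0,\infty)$, so that Jensen's inequality applies to the average.

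Combining the decomposition with subadditivity gives, pointwise in $x$,
\begin{equation*}
G(|\nabla u - (\nabla u)_R|) \leq C\bigl[G(|\nabla u - \nabla v|) + G(|\nabla v - (\nabla v)_R|) + G(|(\nabla u)_R - (\nabla v)_R|)\bigr].
\end{equation*}
Integrating over $B_R$, the first two terms on the right already appear in the desired bound. For the third, Jensen's inequality together with the convexity of $G$ and the identity $(\nabla u)_R - (\nabla v)_R = \fint_{B_R}(\nabla u - \nabla v)\,dx$ yields
\begin{equation*}
G(|(\nabla u)_R - (\nabla v)_R|) \leq \fint_{B_R} G(|\nabla u - \nabla v|)\,dy,
\end{equation*}
so that $\int_{B_R} G(|(\nabla u)_R - (\nabla v)_R|)\,dx \leq \int_{B_R} G(|\nabla u - \nabla v|)\,dx$, which closes the estimate.

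There is no real obstacle here: the argument is structural and depends only on $G$ being a convex, subadditive-up-to-a-constant function vanishing at $0$. The only mild point to verify is that the constants produced by the $(a+b+c)^p$ expansion and by Jensen depend only on $p$ and $n$ (the latter entering trivially through the normalization of the average), which matches the statement.
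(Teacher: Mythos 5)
Your argument is correct and follows the same strategy as the paper: decompose $\nabla u - (\nabla u)_R$ into the three natural pieces, control $G$ of the sum by the sum of $G$'s via the elementary $(a+b+c)^p \leq 3^{p-1}(a^p+b^p+c^p)$ inequality, and handle the constant term $|(\nabla u)_R - (\nabla v)_R|$ by Jensen using convexity and monotonicity of $G$. The only cosmetic difference is that you split into three terms at once, whereas the paper performs two successive two-term splits; the substance is identical.
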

	\begin{proof}
		Note that
		$$
		G(x+y)\leq C(G(x)+G(y)).
		$$ 
		Observe also
		\begin{align}\label{lemma3.3proof}
			G(|\nabla u-(\nabla u)_{R}|) &\le C\left(G(|\nabla v-(\nabla u)_{R}|)+G(|\nabla u-\nabla v|)\right)\nonumber \\
			&\leq C \Big(G(|\nabla v-(\nabla v)_{R}|)+   G(|(\nabla u)_{R}-(\nabla v)_{R}|)\nonumber \\
			&\quad\,+G(|\nabla u-\nabla v|)\Big).
		\end{align}	
		Using the convexity of $G$ and Jensen's inequality, one has
		$$
		G(|(\nabla u)_{R}-(\nabla v)_{R}|)=G(|\nabla(u-v)_{R}|) 
		\leq \Big(G(|\nabla(u-v)|)\Big)_{R}.
		$$
		Integrating the latter over $B_R$, we get
		\begin{align*}
			\int_{B_R}G(|(\nabla(u-v)_{R}|)\,dx&\le\int_{B_R} \Big(G(|\nabla (u-v)|)\Big)_{R}\,dx\\
			&=\int_{B_R}G(|\nabla(u-v)|)\,dx,
		\end{align*}
		therefore, integrating \eqref{lemma3.3proof} over $B_R$, we get the desired estimate.
	\end{proof}
	Note that if $v\in W^{1,p}(B_R)$, $r>0$ is a minimizer of the functional
	$$
	\int_{B_R}H(\nabla v)\,dx,
	$$
	then in $B_R$ it solves the Euler-Lagrange equation
	$$
	\Div\left(\nabla H(\nabla v)\right)=0,
	$$
	i.e., $v$ is $H$-harmonic (and hence, has H\"older continuous gradient, \cite{HKM06}). Therefore (see \cite[Lemmas 5.1 and  5.2]{L91} for the proof), the following lemma holds.
	\begin{lemma}\label{l3.2}
		If $v\in W^{1,p}\left(B_R)\right)$ is a minimizer of
		$$
		\int_{B_R}H(\nabla v)\,dx,
		$$
		then there exist $C>0$ and $\sigma>0$ constants depending only on $\kappa_1, \kappa_2$ and $\|v\|_{L^{\infty}(B_r)}$, such that for each $0<r<R$, one has
		\begin{align*}\label{ineq:rCR}
			\int_{B_r} G(|\nabla v-(\nabla v)_{r}|)\,dx\leq C \left(\frac{r}{R}\right)^{n+q\sigma}\int_{B_R} G(|\nabla v-(\nabla v)_{R}|)\,dx,
		\end{align*}
		where $q$ is defined by \eqref{q}.	
	\end{lemma}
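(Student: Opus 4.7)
The strategy is to reduce the statement to the interior $C^{1,\sigma}$-regularity of $H$-harmonic functions -- available from Lieberman's results in \cite{L91} -- and then to translate a pointwise H\"older estimate on $\nabla v$ into the integral decay in the $G$-norm. First, I would note that as a minimizer, $v$ is a weak solution of the Euler-Lagrange equation $\Div(\nabla H(\nabla v))=0$ in $B_R$. The structural conditions \eqref{assumptions} say exactly that the vector field $\nabla H$ satisfies the mixed $(p,2)$-growth hypotheses to which \cite[Lemmas 5.1 and 5.2]{L91} apply. Invoking those lemmas (after first subtracting the affine function $(\nabla v)_R\cdot x$ from $v$, which preserves $H$-harmonicity), I obtain a scale-invariant H\"older estimate of the form
$$
R^\sigma[\nabla v]_{C^{0,\sigma}(B_{R/2})}\le C\left(\frac{1}{|B_R|}\int_{B_R}G(|\nabla v-(\nabla v)_R|)\,dx\right)^{1/q},
$$
with $\sigma,C>0$ depending only on the structural constants of $H$ and $\|v\|_{L^{\infty}(B_R)}$.

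Next, I would split the argument into the cases $r\ge R/2$ and $r<R/2$. In the first case the conclusion is immediate by absorbing $(r/R)^{n+q\sigma}$ into the constant and applying Lemma \ref{l3.3} (with the auxiliary function being the affine map with gradient $(\nabla v)_R$) to compare the $B_r$ and $B_R$ integrals. In the second case, the scale-invariant H\"older estimate above yields
$$
|\nabla v(x)-(\nabla v)_r|\le\osc_{B_r}\nabla v\le C\left(\frac{r}{R}\right)^\sigma\left(\frac{1}{|B_R|}\int_{B_R}G(|\nabla v-(\nabla v)_R|)\,dx\right)^{1/q}.
$$
Since $\nabla v$ is locally bounded (by the accompanying $L^\infty$-bound in \cite{L91}), the oscillation lies in a bounded range and on that range $G(z)\le Cz^q$, so
$$
G(|\nabla v(x)-(\nabla v)_r|)\le C\left(\frac{r}{R}\right)^{q\sigma}\frac{1}{|B_R|}\int_{B_R}G(|\nabla v-(\nabla v)_R|)\,dx.
$$
Integrating over $x\in B_r$ and using $|B_r|/|B_R|=(r/R)^n$ produces exactly the claimed estimate.

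The main technical obstacle is calibrating the exponent $q$ so that the decay rate matches the one coming from Lieberman's $C^{1,\sigma}$ theory. In the pure cases ($\kappa_1=0$ or $\kappa_2=0$) this is classical Campanato decay and $q\sigma$ is precisely the H\"older exponent of the gradient of the corresponding $H$-harmonic functions. In the mixed case one exploits the a priori $L^\infty$-bound on $\nabla v$ to absorb the quadratic piece of $G$ into the $p$-growth piece (when $q=p$), up to a constant depending on $\|\nabla v\|_{L^\infty(B_{R/2})}$; the reverse absorption handles the case $q=2$. Once this calibration is in place, the remainder of the argument is essentially algebraic bookkeeping.
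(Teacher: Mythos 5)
The paper itself does not prove Lemma~\ref{l3.2}: it simply cites \cite[Lemmas~5.1 and~5.2]{L91} after noting that the minimizer is $H$-harmonic. Your proposal therefore goes beyond the paper by actually sketching how the Lieberman estimate is turned into the stated Campanato decay, and the overall architecture (reduce to a scale-invariant H\"older estimate, split into $r\ge R/2$ and $r<R/2$, push a pointwise oscillation bound into the $G$-integral) is the right one. In the two pure cases it works.

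There is, however, a genuine gap in the mixed case $\kappa_1>0$, $\kappa_2>0$, where \eqref{q} gives $q=p>2$. The inequality ``$G(z)\le Cz^q$ on bounded ranges'' is simply false there: for $z\to 0$, $G(z)=\kappa_1 z^p/p+\kappa_2 z^2/2\sim \kappa_2 z^2/2$, so $G(z)/z^p\sim z^{2-p}\to\infty$ and no finite $C$ works on $(0,M]$. The fix you offer in your last paragraph points the wrong way: the a~priori $L^\infty$-bound on $\nabla v$ gives an \emph{upper} bound $z\le M$, which lets you control $z^p$ by $z^2$, not the other way around; to absorb the quadratic part of $G$ into $z^p$ you would need a \emph{lower} bound on $z$, which is not available (and fails near the zero set of $\nabla v-(\nabla v)_r$). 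For the same reason the ``scale-invariant H\"older estimate'' as you wrote it, with a $(\,\cdot\,)^{1/q}$-power of the $G$-average on the right, is not the natural one in the mixed setting; the Orlicz/Lieberman estimate produces $G^{-1}$ of the $G$-average.

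The clean way to close the gap is to use the elementary monotonicity $G(t z)\le t^2 G(z)$ for all $t\in[0,1]$ and $p\ge2$ (because $t^p\le t^2$), together with the natural $G^{-1}$-form of the oscillation bound. Writing $Q:=\frac{1}{|B_R|}\int_{B_R}G(|\nabla v-(\nabla v)_R|)\,dx$, the Lieberman estimate gives $\osc_{B_r}\nabla v\le C\,(r/R)^{\sigma^*}G^{-1}(Q)$ for some H\"older exponent $\sigma^*>0$; then for $x\in B_r$ and $t:=C(r/R)^{\sigma^*}\le 1$ you get $G(|\nabla v(x)-(\nabla v)_r|)\le G(tG^{-1}(Q))\le t^2 Q$, and integrating over $B_r$ yields decay with exponent $n+2\sigma^*$. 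The lemma as stated then holds by choosing $\sigma:=2\sigma^*/p$, so that $q\sigma=2\sigma^*$; the $\sigma$ in the statement need not equal the H\"older exponent of $\nabla v$, which is why this reconciles with the assertion. Finally, a small quibble on the $r\ge R/2$ case: Lemma~\ref{l3.3} is stated for two Sobolev functions on $B_R$ and does not directly compare the $B_r$ and $B_R$ seminorms; what you actually need is the near-minimality of the mean, $\int_{B_r}G(|\nabla v-(\nabla v)_r|)\le C\int_{B_r}G(|\nabla v-c|)$ for any constant $c$ (take $c=(\nabla v)_R$), which follows from Jensen's inequality exactly as in the proof of Lemma~\ref{l3.3}.
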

	\subsection*{An auxiliary function}
	We close this appendix by constructing a function $H_0$ satisfying \eqref{assumptions}. Its role is evident in the section \ref{s8}, when elaborating a scaling argument aimed to obtaining sharp regularity of minimizers of \eqref{1.1} with large gradient at the free boundary points. To construct it, first we choose $\nu>0$ small enough so that the function 
	$$
	h(z):=|z|^p+\nu |z|^2(\kappa_0^2-|z|^2)^3\chi_{\{|z|\le\kappa_0\}},\quad z\in\R^n
	$$
	is of class $C^2(\R^n)$ with
	$$
	|D^2h(z)|\ge\nu_1 |z|^{p-2}+\nu_2,
	$$
	for some positive constants $\nu_1$ and $\nu_2$, depending only on $\kappa_0$ and $p$. Here $\chi_E$ is the characteristic function of the set $E$.
	\begin{figure}[h]
		\begin{center}
			\scalebox{0.7} 
			{
				\begin{pspicture}(0,-6.305475)(12.114815,2.694525)
					\definecolor{colour6}{rgb}{0.5019608,0.7019608,0.7019608}
					\definecolor{colour7}{rgb}{0.5019608,0.6,1.0}
					\definecolor{colour8}{rgb}{1.0,0.4,0.4}
					\psframe[linecolor=black, linewidth=0.04, dimen=outer](12.114815,2.694525)(0.0,-6.305475)
					\psbezier[linecolor=colour6, linewidth=0.04](0.8962963,2.1611917)(0.91481483,1.0130435)(1.8962963,-3.1388085)(2.6962962,-3.738808373591023)(3.4962964,-4.3388085)(3.1148148,-3.9869566)(4.014815,-4.6869564)(4.914815,-5.3869567)(4.9152,-5.8591895)(5.914815,-5.8869567)(6.914429,-5.9147234)(7.036554,-5.4)(7.836554,-4.8)(8.636554,-4.2)(8.696297,-4.238808)(9.296296,-3.6388085)(9.8962965,-3.0388083)(10.996296,0.86119163)(10.996296,2.0611916)
					\rput[bl](3.9851851,1.4908212){\small{$h(z),\textrm{ with }\kappa_0=1,\, \nu=1$, is convex} }
					\psline[linecolor=colour7, linewidth=0.04](2.9629629,1.5908213)(3.762963,1.5908213)
					\psline[linecolor=colour6, linewidth=0.04](2.9629629,1.0908213)(3.7407408,1.094525)
					\psline[linecolor=colour8, linewidth=0.04, linestyle=dashed, dash=0.17638889cm 0.10583334cm](3.0148149,0.5760064)(3.8148148,0.5760064)
					\rput[bl](10.092592,-3.5945492){\large{$h(z)$}}
					\rput[bl](3.9629629,0.92045087){\small{$h(z),\textrm{ with }\kappa_0=1,\, \nu=1.5$, is not convex} }
					\rput[bl](3.988889,0.4026731){\small{$|z|^3$} }
					\psbezier[linecolor=colour7, linewidth=0.04](0.93655396,2.2)(1.036554,0.2)(1.8365539,-2.6)(2.436554,-3.4999999999999987)
					\psbezier[linecolor=colour7, linewidth=0.04](2.436554,-3.5)(2.5365539,-3.8)(4.636554,-6.0)(6.036554,-5.9)(7.436554,-5.8)(8.536554,-4.6)(9.336554,-3.6)
					\psbezier[linecolor=colour7, linewidth=0.04](9.336554,-3.6)(10.336554,-2.0)(10.936554,0.9)(10.978221,2.0)
					\psbezier[linecolor=colour8, linewidth=0.03, linestyle=dashed, dash=0.17638889cm 0.10583334cm](0.8962963,2.1611917)(1.0148149,1.3079389)(1.636554,-5.8)(5.8962965,-5.88695652173913)(10.156038,-5.973913)(10.8962965,1.3563768)(10.996296,2.0605898)
				\end{pspicture}
			}
			\caption{The function $h(z)$ for $p=3$ and certain values of $\kappa_0$ and $\nu$.}
		\end{center}
	\end{figure}
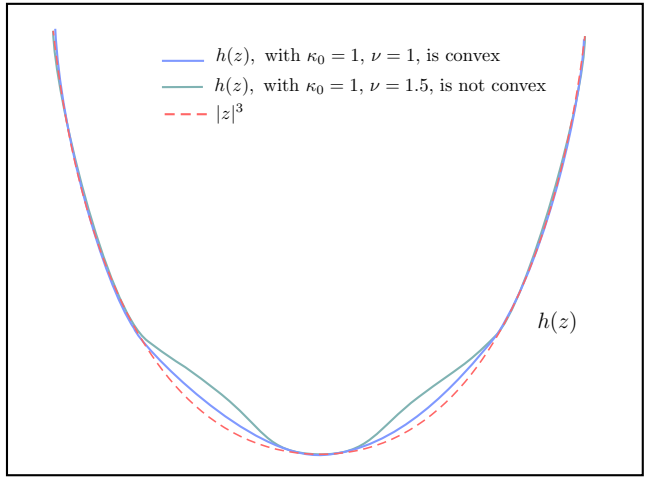
	Observe that for fixed $p\ge2$, $\kappa_0>0$ and small $\nu>0$, the function $h(z)$ is strictly convex (see below), agrees with $|z|^p$ away from zero, and its graph is never below $|z|^p$ (see Figure 2). We now set
	\begin{equation}\label{H}
		H_0(\xi):=\frac{1}{p\varepsilon^2}\left(h(\varepsilon\xi+a)-h(a)-\varepsilon\nabla h(a)\cdot\xi\right).
	\end{equation}
	Below we check that $H_0$ satisfies the conditions \eqref{assumptions}. Direct computation reveals
	\begin{equation*}
		\begin{aligned}
			\nabla h(z)&=p|z|^{p-2} z +2\nu \left[(\kappa_0^2-|z|^2)^3 - 3(\kappa_0^2-|z|^2)^2|z|^2\right] z\chi_{\{|z|\leq \kappa_0\}}\\
			&=p|z|^{p-2} z +2\nu(\kappa_0^2-4|z|^2) (\kappa_0^2-|z|^2)^2z\chi_{\{|z|\leq \kappa_0\}},
		\end{aligned}
	\end{equation*}
	and
	\begin{equation}\label{hessian}
		\begin{aligned}
			D^2 h(z)&=p|z|^{p-2}\mathbb{I}_n+   p(p-2)|z|^{p-4} z\otimes z\\
			+
			&2\nu(\kappa_0^2-4|z|^2) (\kappa_0^2-|z|^2)^2 \mathbb{I}_n\chi_{\{|z|\leq \kappa_0\}}\\
			-&12\nu(\kappa_0^4-3\kappa_0^2 |z|^2+2|z|^4)z\otimes z \chi_{\{|z|\leq\kappa_0\}},
		\end{aligned}
	\end{equation}
	where $\mathbb{I}_n$ is the $n\times n$ identity matrix. Note that the last two terms in \eqref{hessian} are bounded, hence	
	\begin{equation}\label{eq:d2hub}
		|D^2 h(z)|\leq p(p-1)|z|^{p-2}+C_{p, \kappa_0,\nu},
	\end{equation}
	for a constant $C_{p,\kappa_0,\nu}>0$ depending only on $p$, $\kappa_0$ and $\nu$.
	
	To have a lower bound on the quadratic form $\eta^TD^2 h(z)\eta$, $\eta\in\R^n$, we will consider two cases $|z|<\kappa_0/4$ and $|z|\geq\kappa_0/4$.
	In the first case, we have
	\begin{equation*}
		\begin{aligned}
			\eta^TD^2h(z)\eta&=p|z|^{p-2}|\eta|^2 +p(p-2)|z|^{p-4}|z\cdot \eta|^2+\nu(\kappa_0^6 |\eta|^2-10\kappa_0^4|z\cdot\eta|^2)\\
			&\geq\left(p|z|^{p-2}+6\nu \kappa_0^6\right)|\eta|^2.
		\end{aligned}
	\end{equation*}
	In the second case, note that
	\begin{equation*}
		\begin{aligned}
			\eta^TD^2h(z)\eta&=p|z|^{p-2}|\eta|^2 +p(p-2)|z|^{p-4}|z\cdot\eta|^2\\
			&\quad+
			2\nu(\kappa_0^2-4|z|^2) (\kappa_0^2-|z|^2)^2|\eta|^2\chi_{\{|z|\leq\kappa_0\}}\\
			&\quad-12\nu (\kappa_0^4-3\kappa_0^2 |z|^2+2|z|^4)|z\cdot\eta|^2 \chi_{\{|z|\leq\kappa_0\}}\\
			&\geq p|z|^{p-2}|\eta|^2-2\nu\kappa_0^6 |\eta|^2-12\nu(\kappa_0^4+2|z|^4)|z|^2 |\eta|^2\chi_{\{|z|\leq\kappa_0\}}\\
			&\geq\left(p|z|^{p-2}-38\nu\kappa_0^6\right)|\eta|^2\\
			&\geq\left(\frac{p}{2}|z|^{p-2}+\frac{p}{2}|\kappa_0/4|^{p-2}-38\nu\kappa_0^6\right)|\eta|^2\\
			&\geq\left(\frac{p}{2}|z|^{p-2}+\frac{p}{4}|\kappa_0/4|^{p-2}\right)|\eta|^2,
		\end{aligned}
	\end{equation*}
	where the last inequality holds for $\nu\leq\frac{p}{152}|\kappa_0/4|^{p-8}$. To sum up, for $\nu>0$ small enough, we have
	\begin{align}\label{eq:d2hlb}
		\eta^TD^2h(z)\eta
		\geq\left(c_1|z|^{p-2}+c_2\right) |\eta|^2,
	\end{align}	
	where $c_1>0$ is a constant depending only on $p$ and $c_2>0$ depends only on $p$ and $\kappa_0$.
	
	We are now ready to see that $H_0$ defined by \eqref{H} satisfies \eqref{assumptions}. We have
	$$
	\nabla H_0(\xi) = \frac{1}{p\varepsilon}\left(\nabla h(\varepsilon\xi+a)-\nabla h(a)\right)=
	\frac{1}{p}\int_0^1D^2 h(t\varepsilon\xi+a)\cdot\xi\,dt,
	$$
	therefore, from \eqref{eq:d2hub},
	$$
	|\nabla H_0(\xi)|\leq C_p(\varepsilon^{p-2} |\xi|^{p-2} +|a|^{p-2} +1)|\xi|,
	$$
	for a constant $C_p>0$ depending only on $p$. Thus, the first inequality of \eqref{assumptions} is satisfied. Also
	$$
	D^2H_0(\xi)=\frac{1}{p}D^2h(\varepsilon\xi+a),
	$$
	and similarly
	$$
	|D^2H_0(\xi)|\leq C_p(\varepsilon^{p-2} |\xi|^{p-2} +|a|^{p-2} +1).
	$$
	Thus, $H_0$ satisfies the second inequality of \eqref{assumptions} as well. To check the last inequality of \eqref{assumptions}, observe that from \eqref{eq:d2hlb} we have
	\begin{align*}
		\eta^TD^2H_0(\xi)\eta
		\geq\left(c_1|\varepsilon\xi+a|^{p-2}+c_2\right)|\eta|^2.
	\end{align*}
	On the other hand, for $|\xi|>2|a|/\varepsilon$, we estimate
	$$
	c_1|\varepsilon\xi+a|^{p-2}+c_2\geq c_1\left(\varepsilon|\xi|-|a|\right)^{p-2}+c_2\ge2^{2-p} c_1\varepsilon^{p-2}|\xi|^{p-2}+c_2,
	$$
	and for
	$|\xi|\leq 2|a|/\varepsilon$, one has
	\begin{equation*}
		\begin{aligned}
			(\varepsilon^{p-2}|\xi|^{p-2}+|a|^{p-2} +1)&\leq2^{p-2}|a|^{p-2}+|a|^{p-2}+1\\
			&\leq\frac{1}{c_2}\left(2^{p-2}|a|^{p-2}+|a|^{p-2}+1\right)\left(c_1|\varepsilon\xi+a|^{p-2}+c_2\right).
		\end{aligned}
	\end{equation*}
	Thus, there exist $\tilde{c}_1, \tilde{c}_2>0$ depending uniformly on $|a|$, $p$, and $\kappa_0$, such that
	\begin{align*}
		\eta^TD^2H_0(\xi)\eta
		\geq(\varepsilon^{p-2} \tilde{c}_1|\xi|^{p-2}+\tilde{c}_2) |\eta|^2.
	\end{align*}	
	It remains to take $\kappa_1 = \varepsilon^{p-2} \tilde{c}_1, \kappa_2=\tilde{c}_2$ and note that $\kappa_1, \kappa_2$ are uniformly bounded, while $\kappa_1+\kappa_2$ is uniformly away from zero. Thus, $H_0$ satisfies all three inequalities of \eqref{assumptions}.
	
	\section{}\label{B}
	For the reader's convenience, we collect here some known results that were used in the paper. The first result is from \cite[Lemma 5.13]{GM12}.
	\begin{lemma}\label{l2.1}
		If $\phi(\rho)\ge0$ is a non-decreasing function and
		$$
		\phi(\rho)\le A\left[\left(\frac{\rho}{R}\right)^\alpha+\varepsilon\right]\phi(R)+BR^\beta,
		$$
		for some $A$, $\alpha$, $\beta>0$, with $\alpha>\beta$ and for all $0\le\rho\le R\le R_0$, where $R_0>0$ is given, then there exist positive constants $\varepsilon_0$ and $c$, depending only on $A$, $\alpha$, $\beta$, such that if $\varepsilon\le\varepsilon_0$, then
		$$
		\phi(\rho)\le c\left[\left(\frac{\rho}{R}\right)^\beta\phi(R)+B\rho^\beta\right],
		$$
		for all $0\le\rho\le R\le R_0$.
	\end{lemma}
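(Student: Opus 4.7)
This is the classical Giaquinta--Martinazzi iteration lemma, and the plan is to reduce the continuous inequality to a geometric recursion along a discrete sequence of radii $\tau^k R$ and then pass back to arbitrary $\rho\in(0,R]$ via monotonicity of $\phi$.

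The first step is to introduce an intermediate exponent. Since $\alpha>\beta$, I would fix any $\gamma\in(\beta,\alpha)$ (for example $\gamma:=(\alpha+\beta)/2$). I then choose $\tau\in(0,1)$ so small that $2A\tau^{\alpha-\gamma}\le 1$, which is possible because $\alpha>\gamma$, and afterwards I define
\[
\varepsilon_0:=\frac{\tau^\gamma}{2A}.
\]
With these choices, whenever $\varepsilon\le\varepsilon_0$, the hypothesis applied with $\rho:=\tau R$ gives
\[
\phi(\tau R)\le A\bigl(\tau^\alpha+\varepsilon\bigr)\phi(R)+BR^\beta\le \tau^\gamma\phi(R)+BR^\beta,
\]
valid for every $R\in(0,R_0]$.

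The second step is to iterate. Applying the displayed inequality to $R$ replaced by $\tau^k R$ and writing $b_k:=\phi(\tau^k R)$, I obtain the geometric recursion
\[
b_{k+1}\le\tau^\gamma b_k+B\tau^{k\beta}R^\beta.
\]
Setting $c_k:=b_k\tau^{-k\beta}$ turns this into $c_{k+1}\le\tau^{\gamma-\beta}c_k+B\tau^{-\beta}R^\beta$, and since $\tau^{\gamma-\beta}<1$, summing the resulting geometric series yields
\[
b_k\le\tau^{k\gamma}\phi(R)+\frac{B\tau^{-\beta}}{1-\tau^{\gamma-\beta}}\,\tau^{k\beta}R^\beta=\tau^{k\gamma}\phi(R)+C_1 B\,(\tau^k R)^\beta,
\]
where $C_1=C_1(A,\alpha,\beta)$. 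Crucially, choosing $\gamma>\beta$ rather than $\gamma=\beta$ is exactly what makes the series geometric and avoids picking up an unwanted logarithmic factor $k$; this is the one place where a careful choice is required and is the main (minor) technical point.

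The final step is to remove the discreteness. For an arbitrary $\rho\in(0,R]$, pick the unique integer $k\ge 0$ with $\tau^{k+1}R<\rho\le\tau^k R$. Since $\phi$ is non-decreasing,
\[
\phi(\rho)\le b_k\le\tau^{k\gamma}\phi(R)+C_1 B(\tau^k R)^\beta\le\tau^{-\gamma}\Bigl(\frac{\rho}{R}\Bigr)^{\!\gamma}\phi(R)+C_1 B\tau^{-\beta}\rho^\beta.
\]
Because $\gamma>\beta$ and $\rho/R\le 1$, we have $(\rho/R)^\gamma\le(\rho/R)^\beta$, so the bracketed expression is dominated by a multiple of $(\rho/R)^\beta\phi(R)+B\rho^\beta$. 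Absorbing $\tau^{-\gamma},\tau^{-\beta}$ into a single constant $c=c(A,\alpha,\beta)$ yields the stated conclusion for every $0\le\rho\le R\le R_0$.
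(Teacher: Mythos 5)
Your proof is correct: the choice of intermediate exponent $\gamma\in(\beta,\alpha)$, the smallness conditions $2A\tau^{\alpha-\gamma}\le 1$ and $\varepsilon_0=\tau^\gamma/(2A)$, the geometric recursion for $\phi(\tau^k R)$, and the passage to arbitrary $\rho$ by monotonicity all check out, with constants depending only on $A,\alpha,\beta$. The paper itself offers no proof of this lemma (it is quoted from \cite[Lemma 5.13]{GM12}), and your argument is exactly the standard iteration proof of that cited result, so there is nothing to compare beyond noting the agreement.
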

	The next lemma is from \cite[Corollary 3.10]{MZ97}.
	\begin{lemma}\label{l2.2}
		If $u$ is a $p$-superharmonic function in $B_r$, i.e., $-\Delta_pu\ge0$, then for $\theta\in(0,1)$ and any $0<q\le p$, one has
		$$
		\sup_{B_{\theta r}}u^-\le\frac{C}{(1-\theta)^{n/q}}\frac{1}{|B_r|}\|u^-\|_{L^q(B_r)},
		$$
		where the constant $C>0$ depends only on $n$ and $p$.
	\end{lemma}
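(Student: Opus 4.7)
The plan is to reduce the statement to the classical local boundedness estimate for the positive part of a weak $p$-subsolution and then deploy a Moser iteration scheme. Setting $v := -u$, the hypothesis $-\Delta_p u \geq 0$ translates into
$$
\int_{B_r} |\nabla v|^{p-2} \nabla v \cdot \nabla \varphi \, dx \leq 0
$$
for every nonnegative $\varphi \in C_0^\infty(B_r)$, so $v$ is a weak $p$-subsolution and $u^- = v^+$. It thus suffices to bound $\sup_{B_{\theta r}} v^+$ by a universal multiple of $\left(|B_r|^{-1}\int_{B_r}(v^+)^q\right)^{1/q}$.

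First, I would derive a Caccioppoli-type inequality. Testing the subsolution inequality with $\varphi = \eta^p [\min(v^+, M)]^s$ for an exponent $s > 0$, a cutoff $\eta \in C_0^\infty(B_r)$, and a truncation parameter $M$, expanding $\nabla\varphi$, and then letting $M \to \infty$ while using H\"older's inequality to absorb the cross term $|\nabla v^+|^{p-1}|\nabla\eta|(v^+)^s$ into the dominant term, one obtains
$$
\int_{B_r}\eta^p (v^+)^{s-1}|\nabla v^+|^p\, dx \leq \frac{C(p)}{s^p}\int_{B_r}|\nabla\eta|^p (v^+)^{s+p-1}\,dx.
$$
Setting $\kappa:=s+p-1$ and $W:=(v^+)^{\kappa/p}$, this is equivalent to $\int \eta^p |\nabla W|^p \leq C(p,\kappa)\int |\nabla\eta|^p W^p$. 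Combining with the Sobolev embedding applied to $\eta W$ (using the usual exponent $\chi=n/(n-p)$ when $p<n$, and any fixed $\chi>1$ arising from the embedding when $p\geq n$), this yields the reverse H\"older inequality
$$
\left(\frac{1}{|B_{r'}|}\int_{B_{r'}}(v^+)^{\chi\kappa}\right)^{1/(\chi\kappa)} \leq \left(\frac{C(n,p)}{r-r'}\right)^{p/\kappa} \left(\frac{1}{|B_r|}\int_{B_r}(v^+)^\kappa\right)^{1/\kappa}
$$
for every pair of concentric radii $r'<r$ inside the original ball.

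Next, I would iterate Moser-style. Choose exponents $\kappa_j=\chi^j p$ and radii $R_j=\theta r + (1-\theta)r\,2^{-j}$, so that $R_j\downarrow\theta r$ and $R_j-R_{j+1}\simeq (1-\theta)r\,2^{-j}$. Chaining the reverse H\"older estimate along this sequence and tracking constants produces, upon summing the telescoping product of $(C\,2^j/((1-\theta)r))^{p/\kappa_j}$ via the convergent series $\sum_j\chi^{-j}<\infty$, the bound
$$
\sup_{B_{\theta r}}v^+ \leq \frac{C(n,p)}{(1-\theta)^{n/p}}\left(\frac{1}{|B_r|}\int_{B_r}(v^+)^p\right)^{1/p}.
$$

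The main technical step, and where I expect the most care, is descending from $\kappa_0=p$ to the stated range $0<q\leq p$. When $q<p$, the Caccioppoli argument with negative exponent $s=q-p+1\leq 0$ does not directly apply. Here I would invoke the classical interpolation trick: from $\|v^+\|_{L^p(B_\rho)}^p \leq \|v^+\|_{L^\infty(B_\rho)}^{p-q}\|v^+\|_{L^q(B_\rho)}^{q}$, applying the Moser bound above on a nested dyadic sequence of radii $\rho_j$ interpolating between $\theta r$ and $r$, one obtains a recursive inequality of the form
$$
h(\rho_1) \leq \varepsilon\, h(\rho_2) + \frac{C_\varepsilon}{(\rho_2-\rho_1)^{n/q}}\left(\frac{1}{|B_r|}\int_{B_r}(v^+)^q\right)^{1/q}
$$
for the nondecreasing function $h(\rho):=\sup_{B_\rho}v^+$, via Young's inequality with conjugate exponents $p/(p-q)$ and $p/q$. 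Absorbing the $h(\rho_2)$ term through the geometric-series iteration argument of Lemma \ref{l2.1} then yields the desired estimate with the sharp dependence $(1-\theta)^{-n/q}$, completing the proof.
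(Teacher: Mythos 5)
The paper does not actually prove this lemma: it is recorded in Appendix \ref{B} as a known estimate, quoted directly from \cite[Corollary 3.10]{MZ97}, and used as a black box in the proof of Lemma \ref{l4.1}. Your Moser-iteration argument for the subsolution $v=-u$ is, in substance, the standard proof of exactly this local boundedness estimate, i.e. the same route as the cited source rather than a genuinely different one, and the overall structure is sound: Caccioppoli plus Sobolev gives a reverse H\"older inequality, iteration gives the $L^p\to L^\infty$ bound, and the descent to $0<q\le p$ via the interpolation $\int (v^+)^p\le(\sup v^+)^{p-q}\int (v^+)^q$, Young's inequality and an absorption iteration is precisely what produces the $(1-\theta)^{-n/q}$ dependence. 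Two details need tightening. First, with the test function $\varphi=\eta^p[\min(v^+,M)]^s$ the cross term on the set $\{v^+>M\}$ still contains $|\nabla v^+|^{p-1}$, while the coercive term degenerates there (since $\nabla\min(v^+,M)=0$), so it cannot be absorbed as written before sending $M\to\infty$; the standard fix is to test instead with $\eta^p[\min(v^+,M)]^{s}v^+$ (or an equivalent truncated power), which retains a good term $\eta^p[\min(v^+,M)]^{s}|\nabla v^+|^p$ on all of $\{v^+>0\}$. Second, the absorption of $\varepsilon\,h(\rho_2)$ is not provided by Lemma \ref{l2.1}, which is the Campanato-type decay lemma with a different structure; what you need is the other classical iteration lemma: if $h$ is bounded and nondecreasing and $h(\rho_1)\le\varepsilon h(\rho_2)+A(\rho_2-\rho_1)^{-n/q}+B$ for all $\theta r\le\rho_1<\rho_2\le r$ with a fixed $\varepsilon<1$, then $h(\theta r)\le C\left(A\left((1-\theta)r\right)^{-n/q}+B\right)$. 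Finally, your conclusion is the scaling-correct averaged form $\left(|B_r|^{-1}\int_{B_r}(u^-)^q\,dx\right)^{1/q}$; this matches the statement of the lemma once the $L^q$ norm there is read in the normalized sense of \cite{MZ97}, and in the paper the lemma is only applied on balls of fixed size, so the distinction is harmless.
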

	We close the appendix by recalling the weak Harnack inequality from \cite[Theorem 3.13]{MZ97}.
	\begin{theorem}\label{t2.3}
		If $u$ is a $p$-superharmonic function in $\Omega$, and $0\le u\le M<\infty$ in some ball $B_r\subset\Omega$, for a constant $M$, then for any $\rho$, $\theta\in(0,1)$ and $s\in\left(0,\frac{n(p-1)}{n-p}\right)$, there exists a constant $C>0$, depending only on $p$, $n$, $\rho$, $\theta$ and $s$, such that 
		$$
		\frac{1}{|B_{\rho r}|}\|u\|_{L^s(B_{\rho r})}\le C\inf_{B_{\theta r}}u.
		$$
		In case $p=n$, the conclusion holds for any $s>0$.
	\end{theorem}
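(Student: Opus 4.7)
The plan is to run the classical Moser iteration for $p$-superharmonic functions, assembling the weak Harnack inequality from three ingredients: a supremum bound for negative powers of $u$, a Caccioppoli estimate for $\log u$ yielding $\mathrm{BMO}$-regularity, and a reverse H\"older iteration on positive powers whose range of validity accounts for the sharp exponent $n(p-1)/(n-p)$. To avoid dividing by zero, replace $u$ by $\bar u:=u+\varepsilon$ with $\varepsilon>0$ and recover the statement by letting $\varepsilon\to 0^+$. Testing the weak inequality $-\Delta_p\bar u\ge0$ against $\eta^p\bar u^\alpha$ with $\alpha<0$ and $\eta\in C_c^\infty(B_r)$ a standard cut-off, and applying Young's inequality, produces a Caccioppoli-type estimate
\[
\int_{B_r}\eta^p\bar u^{\alpha-1}|\nabla\bar u|^p\,dx \le \frac{C}{|\alpha|^p}\int_{B_r}|\nabla\eta|^p\bar u^{\alpha+p-1}\,dx.
\]
Setting $\beta:=(\alpha+p-1)/p$ and $v:=\bar u^\beta$ (well-defined for $\alpha\ne-(p-1)$), the display above becomes $\int\eta^p|\nabla v|^p\le C\int|\nabla\eta|^p v^p$, and a Sobolev embedding applied to $\eta v$ yields the reverse H\"older-type inequality
\[
\left(\frac{1}{|B_{r'}|}\int_{B_{r'}}\bar u^{p^*\beta}\,dx\right)^{1/p^*} \le \frac{C(\alpha)}{(r-r')}\left(\frac{1}{|B_r|}\int_{B_r}\bar u^{p\beta}\,dx\right)^{1/p}
\]
for $p<n$, with $p^*:=np/(n-p)$; for $p\ge n$ one uses Trudinger/Morrey to get the analogous inequality with any prescribed exponent larger than $p$ on the left.

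With $\alpha\ll 0$ (so $\beta<0$, i.e.\ $v$ is a negative power), iterate this reverse H\"older inequality through a geometric sequence of shrinking radii between $\rho r$ and $\theta r$; the Caccioppoli constants form a convergent product, producing for every $q_0>0$ the estimate
\[
\sup_{B_{\theta r}}\bar u^{-1} \le C\left(\frac{1}{|B_{\rho r}|}\int_{B_{\rho r}}\bar u^{-q_0}\,dx\right)^{1/q_0},
\]
equivalent to a lower bound on $\inf_{B_{\theta r}}\bar u$ by an $L^{-q_0}$-average on $B_{\rho r}$.

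At the borderline exponent $\alpha=-(p-1)$, direct testing against $\eta^p\bar u^{-(p-1)}$ followed by Young's inequality yields the log-Caccioppoli estimate $\int_{B_r}\eta^p|\nabla\log\bar u|^p\,dx \le C\int_{B_r}|\nabla\eta|^p\,dx$, so by the Poincar\'e inequality $\log\bar u\in\mathrm{BMO}(B_{\rho r})$ with a universal norm. The John--Nirenberg inequality then provides a small exponent $q_0>0$ and a constant $C$, depending only on $p$ and $n$, such that
\[
\left(\frac{1}{|B_{\rho r}|}\int_{B_{\rho r}}\bar u^{q_0}\,dx\right)\left(\frac{1}{|B_{\rho r}|}\int_{B_{\rho r}}\bar u^{-q_0}\,dx\right)\le C.
\]
Combined with the previous step, this is the weak Harnack inequality at the small positive exponent $s=q_0$.

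To reach the sharp range $s\in(0,n(p-1)/(n-p))$ when $p<n$, apply the reverse H\"older inequality with $\alpha\in(-(p-1),0)$: then $\beta=(\alpha+p-1)/p\in(0,(p-1)/p)$, so $v=\bar u^\beta$ is a positive power and the Sobolev step reads
\[
\left(\frac{1}{|B_{r'}|}\int_{B_{r'}}\bar u^{qn/(n-p)}\,dx\right)^{(n-p)/n} \le \frac{C(q)}{(r-r')^p}\,\frac{1}{|B_r|}\int_{B_r}\bar u^q\,dx,
\]
valid for every $q=p\beta\in(0,p-1)$. Chaining finitely many such inequalities $q\mapsto qn/(n-p)$, starting from the $L^{q_0}$-control already at hand, upgrades the estimate to $L^s$ for any admissible $s$, and sending $\varepsilon\to0^+$ completes the proof. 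The sharp upper bound $n(p-1)/(n-p)$ is dictated by the admissibility range $q<p-1$ and matches the integrability threshold of the fundamental solution $|x|^{-(n-p)/(p-1)}$ of $-\Delta_p$, hence is optimal; for $p\ge n$, the Sobolev step accepts any exponent, so every $s>0$ is admissible. The main technical obstacle is the blow-up of the Caccioppoli constants as $\alpha\to 0$ or $\alpha\to-(p-1)$: the former is tamed by Young's inequality in the reverse H\"older step, while the latter borderline is bypassed via the $\mathrm{BMO}$/John--Nirenberg detour, thereby stitching together the negative- and positive-power regimes at the critical exponent.
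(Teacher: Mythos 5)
The paper does not prove this statement at all: Theorem \ref{t2.3} is quoted as a known result, cited from \cite[Theorem 3.13]{MZ97}, and used as a black box in Lemma \ref{l4.1}. Your proposal is therefore not comparable to a proof "in the paper" — but it is, in outline, a correct reconstruction of the classical Moser--Trudinger iteration argument that underlies the cited reference: the Caccioppoli estimate from testing with $\eta^p\bar u^{\alpha}$, $\alpha<0$; the negative-power iteration giving the bound on $\sup\bar u^{-1}$ (hence on $1/\inf\bar u$); the borderline $\alpha=-(p-1)$ handled through the logarithmic estimate, Poincar\'e and John--Nirenberg to cross from negative to small positive exponents; and the bootstrap over positive exponents $q=\alpha+p-1\in(0,p-1)$, whose Sobolev step $q\mapsto qn/(n-p)$ explains exactly the sharp threshold $n(p-1)/(n-p)$ (and the unrestricted range for $p=n$). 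Two routine points deserve a word of care if you write this out in full: the nesting of radii must be arranged for arbitrary $\rho,\theta\in(0,1)$ (your phrase ``shrinking radii between $\rho r$ and $\theta r$'' presumes an ordering; one should iterate inside an intermediate ball of radius strictly between $\max\{\rho,\theta\}r$ and $r$), and in the positive-power chain one must keep every intermediate test exponent strictly below $p-1$, which is achieved by first lowering the starting exponent $q_0$ (harmless, since lower integral averages are dominated by higher ones) so that the final Sobolev step lands at the prescribed $s<n(p-1)/(n-p)$. With these adjustments your argument is sound and is essentially the proof one finds in Mal\'y--Ziemer or Heinonen--Kilpel\"ainen--Martio.
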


\end{document}